\documentclass[12pt,reqno]{amsart}
\usepackage[margin=1in]{geometry}
\usepackage[normalem]{ulem}
\usepackage{amsmath,amsthm,amssymb,color,latexsym}
\usepackage{graphicx}
\usepackage[shortlabels]{enumitem}
\usepackage[export]{adjustbox}
\usepackage{tabularray}
\usepackage{multirow}
\usepackage{thmtools} 
\usepackage[hypertexnames=false]{hyperref} 
\usepackage{arydshln}
\usepackage{cleveref}
\crefname{appendix}{Appendix}{Appendices}
\Crefname{appendix}{Appendix}{Appendices}

\theoremstyle{plain}
\newtheorem{thm}{Theorem}[section]
\newtheorem{prop}[thm]{Proposition}
\newtheorem{lem}[thm]{Lemma}
\newtheorem{crl}[thm]{Corollary}

\theoremstyle{definition}
\newtheorem{defn}[thm]{Definition}
\newtheorem{ex}[thm]{Example}
\newtheorem{rmk}[thm]{Remark}

\numberwithin{equation}{section}

\newcommand\C{\mathbb{C}}
\newcommand\R{\mathbb{R}}

\def\e{\varepsilon}\def\g{\gamma}\def\G{\Gamma}\def\l{\lambda}
\def\b{\beta}
\def\k{\kappa}
\def\T{\Theta}\def\t{\theta}
\def\L{\Lambda}\def\tL{\widetilde{\Lambda}}

\def\cL{\mathcal{L}}

\def\cH{\mathcal{H}}\def\cD{\mathcal{D}}
\def\cK{\mathcal{K}}

\DeclareMathOperator{\RE}{Re}

\DeclareMathOperator{\Ran}{Ran}

\renewcommand{\geq}{\geqslant}
\begin{document}

\title[Eigenvalue escape rates on graphs with a shrinking core]{Two escape rates for negative eigenvalues of quantum graphs with a shrinking core}

\author{Gregory Berkolaiko}
\address{Department of Mathematics, Texas A\&M University, College
  Station, TX 77843-3368, USA}
\author{Denis Borisov}
\address{Institute of Mathematics, Ufa Federal Research Centre, Russian Academy
  of Sciences, Ufa 450008, Russia; \newline
Bashkir State Pedagogical University named after M.~Akhmulla,
  Ufa 450000, Russia; \newline
Peoples' Friendship University of Russia (RUDN University),
  Moscow 117198, Russia}
\author{Marshall King}
\address{Department of Mathematics, Texas State University,
  San Marcos, TX 78666-4684, USA}
\author{Julien Royer}
\address{Institut de Math\'ematiques de Toulouse, Universit\'e de Toulouse,
  F-31062 Toulouse Cedex~9, France}

\begin{abstract}
  We study the negative spectrum of the Laplacian on a metric graph
  with general vertex matching conditions and with two length scales:
  a compact core whose edges have length of order a small parameter
  $\e$, together with finitely many edges of infinite length. As
  $\e\to0$, some negative eigenvalues may escape to $-\infty$, and we
  describe precisely how. There are exactly two rates of escape,
  $\e^{-1}$ and the fractional rate $\e^{-2/3}$. We determine the
  number of eigenvalues of each rate, together with their leading
  coefficients, explicitly from the vertex conditions. The analysis
  rests on the Dirichlet-to-Neumann map of the graph and on an
  implicit Rellich-type theorem, that identifies the power-law rates
  of the solution branches of a nonlinear 2-parameter matrix pencil
  with the leading orders of a one-parameter Hermitian family.
\end{abstract}

\subjclass[2020]{34B45, 81Q35, 34L20, 47A56, 15A22}
\keywords{quantum graph, shrinking edges, Dirichlet-to-Neumann map, negative
  eigenvalues, spectral asymptotics, Rellich theorem, Hermitian preparation,
  matrix pencil}

\dedicatory{Dedicated to Pavel Exner on the occasion of his 80th birthday.}

\maketitle

\section{Introduction}\label{sec:intro}
Differential operators on metric graphs model thin branching structures across
physics and engineering, from conducting wires \cite{KosSch_jpa99,SmiSol_conm06,Car_nhm11}
and optical waveguides \cite{Kuc_incol01,Ong_diss} to diffusion in thin tubes
\cite{Alb_anp12}, beam frames \cite{WilWit_ijms70,LR13,Mei19}, and networks in
biology \cite{Nic_incol85,SarCarAnd_jmb14}. In each case the object of interest
is the spectrum of a Sturm--Liouville or Laplace operator acting on the edges,
augmented by matching conditions at the vertices \cite{BerKuc_graphs,Mugnolo_book}.
The topology of the network makes the spectral theory rich: eigenmodes can
localize exactly on a subgraph \cite{SchKot_prl03,CdV_ahp15}, playing havoc with
inverse and control problems \cite{Kur_jmp13}, though for the Laplacian with
standard conditions such exact localization is destroyed by generic
perturbations of the edge metric \cite{BerLiu_jmaa17}.

This work is motivated by networks whose edge lengths span two scales:
some edges are much shorter than others, their length of order a small
parameter $\e$. Such models arise in the study of metamaterials, where
small-scale inclusions substantially alter the bulk properties
\cite{CheExnTur_anp10,DoKucOng_ems17,LawTanChr_sr22}, and in real
networks whose short links coexist with long-range connections. The
natural question is the behavior of the spectrum as $\e\to0$. While
the spectrum of a compact quantum graph depends analytically on its
edge lengths \cite{BerKuc_incol12,KucZha_jmp19}, that result
explicitly excludes edges shrinking to a point, and it is precisely
this degenerate limit that we study.

Graphs with edges of length scale $\e$ and general self-adjoint vertex
matching conditions have received sustained attention recently. Two
questions on the behavior of the eigenvalue spectrum have been
addressed: the convergence on any compact set, and the asymptotics of
eigenvalues escaping to $\pm\infty$ (and thus leaving any compact set).
For standard vertex matching conditions, convergence was established
by Band and L\'evy \cite{BanLev_ahp17}.  For general matching
conditions, Cacciapuoti \cite{Cac_s19} and Berkolaiko, Latushkin, and
Sukhtaiev \cite{BerLatSuk_am19}, later substantially generalized by
Borisov \cite{Bor_am22,Bor_math21}, established $\e\to0$ convergence of spectrum
on any compact set under a \emph{non-resonance condition} preventing
eigenfunctions from localizing on the shrinking part of the graph.
When the condition is dropped, the limit can fail.  This failure was
further investigated for graphs with \emph{all} edges shrinking to
zero: Berkolaiko and Colin de Verdi\`ere \cite{BerCdV_jmaa24}
classified all possible eigenvalue asymptotics, finding some
``exotic'' eigenvalues whose finite limit is an obstacle to the above
convergence and also finding that the eigenvalues escaping to
$-\infty$ do so only at the rate $\e^{-1}$.  Notably, this rate
saturates the a priori bound $\l\gtrsim-\e^{-1}$ of Kuchment
\cite{Kuc_wrm04,KosSch_incol06,BolEnd_ahp09}.

In this work, we continue to study the eigenvalues escaping to
$-\infty$ by taking up the case that is genuinely two-scale: a compact
core of edges of length scale $\e$ together with edges of infinite
length, see \Cref{fig:general}.  In a similar setting and for a particularly simple family of
graphs (one short edge and one long), it was found in
\cite{BerBorKin_aamp23} that the eigenvalues can also escape to
$-\infty$ at the \emph{fractional} rate $\e^{-2/3}$.  The fractional
power is striking, as nothing in the problem --- the Laplacian, the
vertex conditions, the edge lengths --- introduces one; it is
reminiscent of Simon's analysis of eigenvalues absorbed into the
continuous spectrum \cite{Sim_jfa77}, where such fractional rates are
produced by a fractional power of the Laplacian, whereas here the same
phenomenon arises for the plain Laplacian itself.
A natural question is \emph{what other fractional rates are
possible in this setting}?

Our main result, \Cref{thm:main}, shows that \emph{these two rates are the
only ones}, independently of the graph's topology or the vertex
conditions. More precisely, we establish, in terms of the vertex data alone:
\begin{itemize}
\item \emph{the rates:} every negative eigenvalue that escapes to $-\infty$ does
  so at one of exactly two rates, $\e^{-1}$ or $\e^{-2/3}$;
\item \emph{the count} of the eigenvalues of each rate, as the inertia
  of an explicit Hermitian matrix built from the vertex conditions;
\item \emph{the leading-order coefficients} of each escaping
  eigenvalue, as an eigenvalue of an explicit Hermitian matrix built
  from the vertex conditions.
\end{itemize}

There are two principal tools behind these results. The first is the
Dirichlet-to-Neumann (DtN) map (also known as the $M$-function) of
spectral theory, which recasts the eigenvalue problem in determinantal
form, as a real-analytic equation in two variables: the small
parameter and the eigenvalue. The second is a Hermitian matrix
analogue of the Weierstrass preparation theorem of algebraic geometry,
which we conjectured in the course of our work on the problem and
which was recently established by Dencker \cite{Den26}.  Dencker's
theorem yields an implicit Rellich-type theorem
(\Cref{thm:perturbedRellich}), which may be of independent interest:
it identifies the power-law rates of the solutions of a determinantal
real-analytic equation (in two variables) with the power-law rates of
the eigenvalues of its one-variable Hermitian section.  As a
consequence, we show the eigenvalue branches of the quantum graph to
be real-analytic in a suitable variable.

The paper is organized as follows. \Cref{sec:setting} fixes the graph
and the operator notation, and the vertex-data blocks that carry the
counts; \Cref{sec:main} states the main theorem and \Cref{sec:example}
works out a small example.  \Cref{sec:outline} outlines the proof.
The principal tools are introduced in \Cref{sec:DtN} (the
Dirichlet-to-Neumann reduction) and \Cref{sec:rellich} (implicit
Rellich theorem); \Cref{sec:proof} contains the proof itself.

\subsection*{Acknowledgements}
We thank Nils Dencker, Mark Goresky, Dmitry Kerner and Mikhail
Zaidenberg for their discussions of the conjectured (at that point of
time) factorization \eqref{eq:PRfact}; this conjecture was later
proved by Nils Dencker \cite{Den26} and now enables our implicit
Rellich-type theorem, \Cref{thm:perturbedRellich}.  We are grateful to
Dean Baskin, Jeremy Marzuola, and Carsten Trunk for discussions of the
possible reasons for the fractional escape rates, and possible ways to
prove it. We are especially grateful to Peter Kuchment for his
encouragement and his interest in this work --- and for his disbelief
in our conjecture that only two rates can occur, which was a great
stimulus to seek the complete picture. The collaboration of J.~R.\ and
G.~B.\ on this paper was made possible by the support of the Labex
CIMI, Toulouse, France (grant ANR-11-LABX-0040-CIMI).

\section{Setting and the main result}\label{sec:setting}

\subsection{The graph and the operator}
Let $\e>0$ be small. The graph $G_\e$ consists of a compact core with $m_0$
\emph{short edges} $e_1,\dots,e_{m_0}$, together with $m_1$ semi-infinite
\emph{leads} attached at some of its vertices, for a total of $m_0+m_1$ edges.
The short edges carry the small scale: the length of the
$j$-th one is
\begin{equation}\label{eq:edgelength}
    \ell_j=\e\,r_j,\qquad j=1,\dots,m_0,
\end{equation}
where the \emph{rescaled length} $r_j>0$ is a constant and $\e$ is the small
parameter, while the leads have infinite length. A representative graph is
shown in \Cref{fig:general}.
\begin{figure}[h]
    \centering
    \includegraphics[scale=0.8]{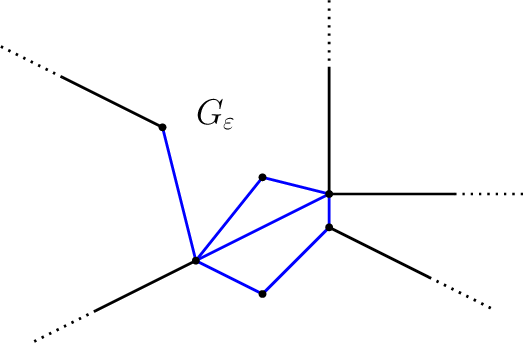}
    \caption{A graph $G_\e$ with $m_0=7$ short edges and $m_1=5$ infinite leads.}
    \label{fig:general}
  \end{figure}

Let $\cH_\e=-\Delta$ be the Laplacian on $G_\e$, acting edgewise on
\begin{equation}\label{eq:domain}
    \cD(\cH_\e)=H^2(G_\e)=\bigoplus_{j=1}^{m_0+m_1}H^2(e_j),
\end{equation}
and made self-adjoint by vertex conditions that are \emph{independent of $\e$}
(specified in \Cref{subsec:vertexcond}). The essential spectrum of $\cH_\e$ is
$[0,\infty)$; its negative spectrum consists of isolated eigenvalues of finite
multiplicity, and these are the object of this paper. An a priori bound of
Kuchment \cite{Kuc_wrm04} (see also \cite{KosSch_incol06,BolEnd_ahp09}) places
the bottom of the spectrum at $\gtrsim-1/\ell_{\min}$, where $\ell_{\min}$ is
the shortest edge length; since $\ell_{\min}=\e\min_j r_j$, every eigenvalue
obeys
\begin{equation}\label{eq:apriori}
    \l\gtrsim-\e^{-1},\qquad\text{equivalently}\qquad
    \k:=\sqrt{-\l}\lesssim\e^{-1/2}\quad(\l<0).
\end{equation}

\subsection{Vertex conditions}\label{subsec:vertexcond}
The vertex conditions couple the boundary values of $f\in H^2(G_\e)$ and of its
inward derivative through the Dirichlet and Neumann traces. Listing the two ends
of each short edge and the single end of each lead,
\begin{equation}\label{eq:DNtrace}
    F=\begin{pmatrix}
        f_1(0)\\ f_1(\ell_1)\\ \vdots\\ f_{m_0}(0)\\ f_{m_0}(\ell_{m_0})
    \end{pmatrix}\oplus\begin{pmatrix}
        f_{m_0+1}(0)\\ \vdots\\ f_{m_0+m_1}(0)
    \end{pmatrix},
    \qquad
    F'=\begin{pmatrix}
        f_1'(0)\\ -f_1'(\ell_1)\\ \vdots\\ f_{m_0}'(0)\\ -f_{m_0}'(\ell_{m_0})
    \end{pmatrix}\oplus\begin{pmatrix}
        f_{m_0+1}'(0)\\ \vdots\\ f_{m_0+m_1}'(0)
    \end{pmatrix},
\end{equation}
so that $F,F'\in\C^m$ with
\begin{equation}\label{eq:mdef}
    m=2m_0+m_1 .
\end{equation}
The signs in $F'$ make every derivative point into its edge. Self-adjoint
vertex conditions can then be written \cite{Kuc_wrm04} as
\begin{equation}\label{eq:vertexcond}
    (I_m-Q)F=0,\qquad QF'=\T\,QF ,
\end{equation}
where $Q$ is the orthogonal projector onto the Neumann--Robin part of the
conditions, $I_m-Q$ projects onto the Dirichlet part, and $\T:\Ran Q\to\Ran Q$
is self-adjoint. All of the vertex data is thus carried by the pair $(Q,\T)$.

The representation \eqref{eq:vertexcond} does not refer to the topology of
$G_\e$: it couples edge ends only through $Q$ and $\T$. Collapsing all vertices
into a single vertex (turning each short edge into a loop and each lead into a
ray, as in \Cref{fig:flower}) realizes any such conditions on the resulting
\emph{flower graph} without loss of generality. We therefore take $G_\e$ to be a
flower graph from now on.
\begin{figure}[h]
    \centering
    \includegraphics{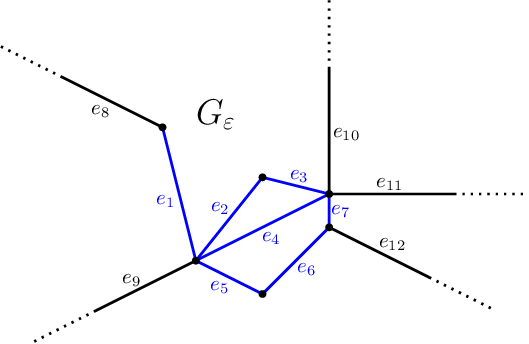}\qquad
    \includegraphics{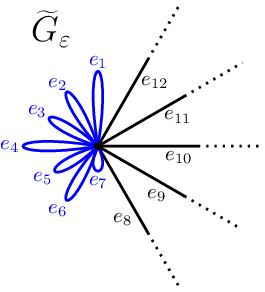}
    \caption{The graph $G_\e$ of \Cref{fig:general} and its flower graph
    $\widetilde G_\e$.}
    \label{fig:flower}
\end{figure}

\subsection{Decomposition of the trace space}\label{subsec:decomp}
The counts in the main theorem are read off a decomposition of
$\Ran Q$ adapted to how each trace direction meets the short scale. On
the two ends of the $j$-th loop, the trace space $\C^m$ carries the
orthonormal \emph{symmetric} and \emph{antisymmetric} directions
\begin{equation}\label{eq:loopeigvec}
    u_1=\tfrac1{\sqrt2}\begin{pmatrix}1\\1\end{pmatrix},
    \qquad
    u_2=\tfrac1{\sqrt2}\begin{pmatrix}1\\-1\end{pmatrix}.
\end{equation}
These organize $\C^m$ into three mutually orthogonal families: the symmetric
loop directions, the leads, and the antisymmetric loop directions,
\begin{align}\label{eq:cLdef}
  \cL_1&=\bigg(\bigoplus_{j=1}^{m_0}\operatorname{span}(u_1)\bigg)\oplus 0_{m_1}
    =\big\{(a_1,a_1,\dots,a_{m_0},a_{m_0},\,0,\dots,0)^{T}:a_j\in\C\big\},\\
  \cL_2&=0_{2m_0}\oplus\C^{m_1}
    =\big\{(0,\dots,0,\,b_1,\dots,b_{m_1})^{T}:b_k\in\C\big\},\nonumber\\
  \cL_3&=\bigg(\bigoplus_{j=1}^{m_0}\operatorname{span}(u_2)\bigg)\oplus 0_{m_1}
    =\big\{(a_1,-a_1,\dots,a_{m_0},-a_{m_0},\,0,\dots,0)^{T}:a_j\in\C\big\},\nonumber
\end{align}
with $\cL_1\oplus\cL_2\oplus\cL_3=\C^m$. In \Cref{sec:DtN} these turn out to be
exactly the directions on which the rescaled Dirichlet-to-Neumann map has orders
$s^2$, $s$, and $1$ as $s\to0$; see \eqref{eq:Ldecomp}.

\begin{lem}\label{lem:decomp}
  Let $Q$ be the orthogonal projector of the vertex conditions. Then
  \begin{equation}\label{eq:RanQdecomp}
    \Ran Q=X_1\oplus X_2\oplus X_3
  \end{equation}
  is an orthogonal decomposition, where
  \begin{equation}\label{eq:Xdef}
    X_1=\Ran Q\cap \cL_1,\qquad
    X_2=\Ran Q\cap (\cL_1\oplus\cL_2)\cap X_1^\perp,\qquad
    X_3=\Ran Q\cap ( X_1\oplus X_2 )^\perp.
  \end{equation}
\end{lem}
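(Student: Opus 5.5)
The plan is to check two things directly from the definitions \eqref{eq:Xdef}: that the three subspaces are pairwise orthogonal and contained in $\Ran Q$, and that together they span $\Ran Q$. Only elementary finite-dimensional linear algebra in $\C^m$ is involved, with the standard inner product, using that $\cL_1,\cL_2,\cL_3$ are mutually orthogonal.

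\emph{Containment and orthogonality.} By construction each $X_i$ is a subspace of $\Ran Q$. We have $X_2\subset X_1^\perp$ by definition, so $X_1\perp X_2$. Likewise $X_3\subset(X_1\oplus X_2)^\perp$, so $X_3$ is orthogonal to both $X_1$ and $X_2$. Hence the sum $X_1+X_2+X_3$ is an orthogonal direct sum contained in $\Ran Q$.

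\emph{Spanning.} Set $Y=X_1\oplus X_2$, a subspace of $\Ran Q$. Given $v\in\Ran Q$, decompose $v=y+w$ with $y\in Y$ and $w\in Y^\perp$ (orthogonal projection in $\C^m$). Since $y\in Y\subset\Ran Q$, we get $w=v-y\in\Ran Q$. Thus $w\in\Ran Q\cap Y^\perp=X_3$, and $v\in X_1\oplus X_2\oplus X_3$. In other words, $X_3$ is precisely the orthogonal complement of $Y$ inside $\Ran Q$, and this already gives \eqref{eq:RanQdecomp}.

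\emph{A further property worth recording.} For later use one also wants $X_1\oplus X_2=\Ran Q\cap(\cL_1\oplus\cL_2)$. The inclusion $\subset$ is clear. For $\supset$, take $v\in\Ran Q\cap(\cL_1\oplus\cL_2)$ and split it as $v=x+z$, with $x$ its orthogonal projection onto $X_1$. Then $z=v-x$ lies in $\Ran Q$, lies in $\cL_1\oplus\cL_2$ (because $X_1\subset\cL_1$), and is orthogonal to $X_1$; hence $z\in X_2$. So $X_2$ is the orthogonal complement of $X_1$ in $\Ran Q\cap(\cL_1\oplus\cL_2)$. This yields a flag
\[
X_1\subset X_1\oplus X_2\subset \Ran Q,
\]
adapted to $\cL_1\subset\cL_1\oplus\cL_2\subset\C^m$.

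There is no genuine obstacle here. The only point needing care is that the orthogonal complements are taken inside $\Ran Q$ (or inside $\Ran Q\cap(\cL_1\oplus\cL_2)$), so that the remainders $w$ and $z$ stay in the required intersections.
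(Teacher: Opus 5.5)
Your proof is correct and follows the paper's argument. The paper first shows $\Ran Q\cap(\cL_1\oplus\cL_2)=X_1\oplus X_2$ via the identity $W=(W\cap U)\oplus\bigl(W\cap(W\cap U)^\perp\bigr)$, which is your ``further property'' obtained by the same projection step, and then takes $X_3$ as the orthogonal complement of $X_1\oplus X_2$ inside $\Ran Q$, exactly as in your spanning step. You are right that the splitting of $\Ran Q$ alone needs only $X_1\perp X_2$ with both inside $\Ran Q$; the extra identity is what later gives $X_3\cap(\cL_1\oplus\cL_2)=\{0\}$, and hence $L_3>0$, in the Schur-complement reduction.
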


The proof is deferred to \Cref{subsec:reduction}, where the decomposition
organizes the block reduction of the determinant equation; the three pieces sort
$\Ran Q$ by how much of the small subspaces each direction sees, with
$X_1\subseteq\cL_1$ orthogonal to $\cL_2\oplus\cL_3$ and
$X_2\subseteq\cL_1\oplus\cL_2$ orthogonal to $\cL_3$.

We now describe the data necessary to describe the counts of the
eigenvalues escaping with each of the two rates and their leading
order coefficients.  Fix orthonormal frames $V_1,V_2,V_3$ for
$X_1,X_2,X_3$ and assemble
$V=\begin{pmatrix}V_1&V_2&V_3\end{pmatrix}$, an orthonormal frame for
$\Ran Q$.  Alongside $\T$ we will need the diagonal positive form
carrying the rescaled loop lengths on $\cL_1$,
\begin{equation}\label{eq:Rdef}
    R:=\bigg(\bigoplus_{j=1}^{m_0}\frac{r_j}{2}\,u_1u_1^*\bigg)\oplus 0_{m_1},
\end{equation}
which reappears as the leading coefficient $J_1(0)$ of the rescaled DtN map
(see \eqref{eq:J0}). The vertex data then enters the counts through the blocks
\begin{equation}\label{eq:blocks}
    \T_{1}=V_1^*\T V_1,\qquad
    \T_{21}=V_2^*\T V_1,\qquad
    L_1=V_1^*R\,V_1 ,
\end{equation}
where $L_1>0$ because $R$ is positive definite on $\cL_1\supseteq X_1$. Finally, with $W_2$ an orthonormal frame for $\ker\T_1$, set
\begin{equation}\label{eq:kerblocks}
    L_2:=W_2^*L_1W_2>0,\qquad
    \Xi:=W_2^*\T_{21}^*\T_{21}W_2=(\T_{21}W_2)^*(\T_{21}W_2)\ge0 ,
\end{equation}
the compressions of $L_1$ and $\T_{21}^*\T_{21}$ to $\ker\T_1$.

\subsection{Main result}\label{sec:main}
The negative spectrum of $\cH_\e$ is bounded below by a constant multiple of
$-\e^{-1}$ (see \eqref{eq:apriori}); our main result describes precisely the
negative eigenvalues that nonetheless escape to $-\infty$ as $\e\to0^+$. There
are exactly two rates of escape, and both the number of eigenvalues of each rate
and their leading coefficients are determined by the vertex data $(Q,\T)$.
Here $n_\pm(\cdot)$ denotes the
number of positive, respectively negative, eigenvalues.

\begin{thm}[Escaping negative eigenvalues]\label{thm:main}
    There exist non-negative integers $n_1,n_2$ such that, for all sufficiently
    small $\e>0$, the negative eigenvalues of $\cH_\e$ that do not remain
    bounded as $\e\to0^+$ split into two families, and no other rate of escape
    occurs:
    \begin{enumerate}[(i)]
        \item \emph{(Rate $\e^{-1}$.)} There are
        \begin{equation}\label{eq:n1}
            n_1=n_-(\T_1)
        \end{equation}
        eigenvalues
        \begin{equation}\label{eq:typeS}
            \l_i(\e)=-a_i\,\e^{-1}\big(1+O(\e^{1/2})\big),\qquad a_i>0,
        \end{equation}
        whose coefficients $a_i$, listed with multiplicity, are the positive
        eigenvalues of the Hermitian matrix $-L_1^{-1/2}\,\T_1\,L_1^{-1/2}$;
        \item \emph{(Rate $\e^{-2/3}$.)} There are
        \begin{equation}\label{eq:n2}
            n_2=\dim\ker\T_1-\dim(\ker\T_1\cap\ker\T_{21})=n_+(\Xi)
        \end{equation}
        eigenvalues
        \begin{equation}\label{eq:typeC}
            \l_i(\e)=-a_i\,\e^{-2/3}\big(1+O(\e^{1/3})\big),\qquad a_i>0,
        \end{equation}
        whose coefficients are the numbers $a_i=c^{2/3}$, with $c$ ranging, with
        multiplicity, over the positive eigenvalues of
        $L_2^{-1/2}\,\Xi\,L_2^{-1/2}$.
    \end{enumerate}
    Moreover each escaping branch $\l_i(\e)$ is, as $\e\to0^+$, a meromorphic
    function of the fractional variable $\e^{1/2}$ (first family), respectively
    $\e^{1/3}$ (second family), with a double pole at $\e=0$; the leading and
    first-correction orders are those shown in \eqref{eq:typeS},
    \eqref{eq:typeC}.
\end{thm}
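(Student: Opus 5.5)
We will reduce the eigenvalue problem to a determinantal equation through the Dirichlet-to-Neumann map. For $\l=-\k^2<0$, an eigenfunction on a loop of length $\e r_j$ is a combination of $e^{\pm\k x}$, and on a lead it is $b\,e^{-\k x}$. Hence $F'=\Lambda(\k,\e)F$, where $\Lambda$ is block diagonal. On the $j$-th loop, in the basis $u_1,u_2$, its blocks are $\k\tanh(\k\e r_j/2)$ and $\k\coth(\k\e r_j/2)$; on each lead the entry is $-\k$. Introduce the rescaled variable $s=\k\e^{1/2}$, so that $\k\e=s\e^{1/2}$. After suitable normalisation, $\lambda$ is an eigenvalue if and only if
\[
\det V^*\bigl(\Lambda-\T\bigr)V=0 .
\]
Multiply through by $\e$ and write the result as a real-analytic matrix function of $(s,t)$ with $t=\e^{1/2}$. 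Its leading behaviour is:
\begin{itemize}
\item on $\cL_1$, order $\k^2\e r_j/2$, i.e.\ the matrix $R$ times $s^2$;
\item on $\cL_2$, order $-\k$;
\item on $\cL_3$, order $2/(\e r_j)$.
\end{itemize}
This is the decomposition \eqref{eq:Ldecomp}. An escaping eigenvalue corresponds to a solution branch with $\k\to\infty$. The a priori bound \eqref{eq:apriori} keeps $s$ bounded.

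Next we use the block decomposition of \Cref{lem:decomp} and take Schur complements. The $X_3$ block contains a nonvanishing part of order $\e^{-1}$ coming from $\cL_3$, so it is invertible, and eliminating it costs only $O(\e)$ corrections. In the $X_2$ direction the dominant term is $-\k P_{\cL_2}$, which is invertible on $X_2$. The reason is that $X_2\subseteq\cL_1\oplus\cL_2$ and $X_2$ is orthogonal to $X_1=\Ran Q\cap\cL_1$, so $X_2$ meets $\cL_1$ trivially and its projection onto $\cL_2$ is injective. Eliminating $X_2$ leaves an effective matrix on $X_1$:
\[
\k^2\e L_1-\T_1+\k^{-1}\T_{21}^*C\T_{21}+\dots
\]
Here $C>0$ comes from inverting the $\cL_2$ part, which carries the sign $-\k$.

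We then read off the two regimes.
\begin{enumerate}[(i)]
\item If $\k^2\e\asymp1$, the leading equation is $\det(aL_1+\T_1)=0$ with $\l=-a\e^{-1}$. This gives $n_-(\T_1)$ roots, equal to the positive eigenvalues of $-L_1^{-1/2}\T_1L_1^{-1/2}$.
\item On $\ker\T_1$ we compress further with $W_2$, using another Schur complement against the nonzero eigenvalues of $\T_1$. The balance becomes $\k^2\e L_2\sim\k^{-1}\Xi$, so $\k^3\e=c$ with $c$ an eigenvalue of $L_2^{-1/2}\Xi L_2^{-1/2}$. Then $\k^2=c^{2/3}\e^{-2/3}$, which yields the rate $\e^{-2/3}$ and $n_+(\Xi)$ branches.
\end{enumerate}
The equality $n_+(\Xi)=\dim\ker\T_1-\dim(\ker\T_1\cap\ker\T_{21})$ is linear algebra, since $\Xi=(\T_{21}W_2)^*(\T_{21}W_2)$. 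Directions in $\ker\Xi$ produce no escaping root, because the next-order terms keep $\k$ bounded there.

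The hardest step is rigour: showing that these formal balances capture \emph{every} solution branch, with the exact multiplicities, and that the branches are analytic in $\e^{1/2}$ or $\e^{1/3}$. For this we invoke the implicit Rellich theorem, \Cref{thm:perturbedRellich}. After the substitution $\mu=1/\k$ (or a suitable power of it), the determinant becomes a real-analytic equation in two variables with a Hermitian structure. The theorem identifies the power-law rates of its solution branches, with multiplicity, with the orders of the eigenvalues of the one-variable Hermitian section. We would compute that section explicitly:
\begin{itemize}
\item its eigenvalues of order one (in the relevant scaling) correspond to $\sigma(\T_1)\setminus\{0\}$;
\item its eigenvalues of order three correspond to $\Xi$ on $\ker\T_1$.
\end{itemize}
This shows that only the exponents $1$ and $2/3$ occur.

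Puiseux analyticity gives meromorphy in the fractional variable. The first correction comes from the next term in each expansion: $\k^{-1}$ relative to $\k^2\e$ gives $O(\e^{1/2})$ in regime (i), and the analogous term gives $O(\e^{1/3})$ in regime (ii). Finally, the Hermitian matrices have to be checked carefully: the signs and the positivity of $C$ must be matched to the hypotheses of the Rellich-type theorem.
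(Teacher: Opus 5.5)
You follow the paper's architecture: the DtN reduction, Schur complements against $X_3$ and then $X_2$, an effective matrix on $X_1$, a further compression to $\ker\T_1$, and \Cref{thm:perturbedRellich} for rigour. But the step that turns the formal balances into a proof is never set up, and as you describe it, it would not go through.

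\textbf{Setting up the implicit Rellich step.} The theorem needs one real-analytic Hermitian $F(\lambda,t)$ with $F(0,0)=0$ and $\partial_\lambda F(0,0)>0$, whose positive branches are all the escaping eigenvalues. None of your parametrizations provides this:
\begin{itemize}
\item With $s=\k\e^{1/2}$, $t=\e^{1/2}$, the first family sits at $s=\sqrt{a_i}\neq0$. Eliminating $X_2$ produces $t\,(s\,V_2^*J_2V_2+t\,\T_{22})^{-1}$, which is not analytic at $s=t=0$, and that is where the second family lives ($s\sim\e^{1/6}$).
\item With $\mu=1/\k$ and branch variable $\e$, the loop entry $\k\tanh(\k\e r_j/2)=\mu^{-1}\tanh\big(\e r_j/(2\mu)\big)$ is not analytic at the origin.
\item With branch variable $\k^2\e$, you get $F(0,0)=\T_1$, which is generally nonzero.
\end{itemize}
The paper takes $s=\k\e$ as branch variable and $t=\k^{-1}$, so that $\e=st$ and $L(s)$ is analytic. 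It performs both Schur complements as analytic functions of $(s,t)$. This is necessary, because the unreduced matrix has its $X_3$ block tending to $L_3>0$. The result is $D(s,t)=t\T_1-t^2\T_{21}^*(\cdot)\T_{21}+s(L_1+\dots)$, which is essentially your effective matrix with corrected signs, multiplied by $\k^{-1}$. It satisfies $D(0,0)=0$ and $\partial_sD(0,0)=L_1>0$.

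The section $D(0,t)$ has negative eigenvalues of orders $t$ and $t^2$. The orders ``one and three'' you announce would, by part (b), give $\k\e\sim\k^{-3}$, that is $\l\sim\e^{-1/2}$.

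Ruling out other orders is the real content (\Cref{rmk:loadbearing}). On $\ker\T_1$ the $t^2$ coefficient is $-\T_{21}^*(\cdot)\T_{21}$, which is semidefinite of constant rank. Hence $D(0,t)$ vanishes identically on $\ker\T_1\cap\ker\T_{21}$, where the branches are $s\equiv0$ (not ``bounded $\k$''), and is of exact order $t^2$ on the rest. Likewise, it is the analyticity of $s(t)$, not bare Puiseux, that gives meromorphy in precisely $\e^{1/2}$ and $\e^{1/3}$.

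\textbf{Signs.} Your signs are wrong in a way that matters. The loop eigenvalues of the DtN map are $-\k\tanh(\k\e r_j/2)$ and $-\k\coth(\k\e r_j/2)$, with the same sign as the lead entry $-\k$. With your signs, the effective matrix $\k^2\e L_1-\T_1+\k^{-1}\T_{21}^*C\T_{21}$ has $n_+(\T_1)$ rate-one roots. Its leading part on $\ker\T_1$ is $\k^2\e L_2+\k^{-1}W_2^*\T_{21}^*C\T_{21}W_2\geq\k^2\e L_2>0$, so no balance $\k^3\e\asymp1$ is possible and the $\e^{-2/3}$ family disappears. The equations you then write, $\det(aL_1+\T_1)=0$ and $\k^2\e L_2\sim\k^{-1}\Xi$, are the correct ones, but they do not follow from your matrix. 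The fractional rate exists precisely because the $\cL_1$ term and the $X_2$ Schur correction have opposite signs.

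\textbf{The factor $C$.} Your $C$ equals $(V_2^*J_2V_2)^{-1}$, which is $I$ only when $X_2\subseteq\cL_2$. Replacing $W_2^*\T_{21}^*C\T_{21}W_2$ by $\Xi$ is therefore unjustified. The paper's reduction makes the same identification when it writes the $X_2$ block as $s(I+o(1))$. This leaves $n_2$ unchanged, but not the constants. For example, take two loops and one lead, with $\Ran Q$ spanned by the symmetric direction of loop 1 and by the normalized sum of the symmetric direction of loop 2 and the lead, and with $\T_1=0$. A direct computation gives $\k^3\e\to4|\T_{21}|^2/r_1$, twice the value predicted by $\Xi$.
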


The fractional rate $\e^{-2/3}$ is the phenomenon that distinguishes the present
two-scale setting from the case of a graph shrinking to a point, where only the
rate $\e^{-1}$ occurs \cite{BerCdV_jmaa24}; see \Cref{sec:example} for the smallest
graph exhibiting it.

\section{Example Illustrating the Main Results}\label{sec:example}
We illustrate \Cref{thm:main} on the smallest flower graph that still exhibits
both rates: one short loop and one lead.

\begin{ex}\label{ex:simplest}
    Let $m_0=m_1=1$ and $Q=I_3$, so that $\Ran Q=\C^3$ and there is no Dirichlet
    part; the graph is a single shrinking loop joined to one infinite lead
    (\Cref{fig:twoedge}). We write the self-adjoint vertex matrix as
    \begin{equation}\label{eq:exTheta}
        \T=\begin{pmatrix}
            \t_{11}&\t_{12}&\t_{13}\\
            \overline{\t_{12}}&\t_{22}&\t_{23}\\
            \overline{\t_{13}}&\overline{\t_{23}}&\t_{33}
        \end{pmatrix},
    \end{equation}
    the first two coordinates being the two ends of the loop and the third the
    end of the lead.
    \begin{figure}[h]
        \centering
        \includegraphics{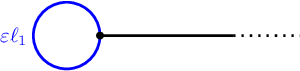}
        \caption{The graph of \Cref{ex:simplest}: one short loop and one lead,
        with only Neumann--Robin conditions at the central vertex.}
        \label{fig:twoedge}
    \end{figure}

    The symmetric loop direction, the lead, and the antisymmetric loop
    direction give the frames
    \begin{equation}
        V_1=\tfrac1{\sqrt2}\begin{pmatrix}1\\1\\0\end{pmatrix},\qquad
        V_2=\begin{pmatrix}0\\0\\1\end{pmatrix},\qquad
        V_3=\tfrac1{\sqrt2}\begin{pmatrix}1\\-1\\0\end{pmatrix},
    \end{equation}
    for $X_1,X_2,X_3$. Only $X_1$ and $X_2$ enter the counts ($X_3$ is removed by
    the reduction of \Cref{prop:reduction}); from \eqref{eq:blocks} and
    \eqref{eq:Rdef},
    \begin{equation}
        \T_1=\tfrac12\big(\t_{11}+2\RE\t_{12}+\t_{22}\big)\in\R,\qquad
        \T_{21}=\tfrac1{\sqrt2}\big(\overline{\t_{13}}+\overline{\t_{23}}\big),\qquad
        L_1=\tfrac{r_1}{2}.
    \end{equation}
    \Cref{thm:main} then gives
    \begin{equation}\label{eq:exn1n2}
        n_1=\begin{cases}1,&\T_1<0,\\ 0,&\T_1\ge0,\end{cases}
        \qquad
        n_2=\begin{cases}1,&\T_1=0\ \text{and}\ \T_{21}\neq0,\\
        0,&\text{otherwise},\end{cases}
    \end{equation}
    since $\ker\T_1\neq\{0\}$ only when $\T_1=0$, in which case
    $\Xi=|\T_{21}|^2$ and $L_2=\tfrac{r_1}{2}$. The corresponding leading
    coefficients are
    \begin{equation}\label{eq:excoeff}
        \l\sim-\frac{2|\T_1|}{r_1}\,\e^{-1}\quad(\T_1<0),
        \qquad
        \l\sim-\Big(\frac{2|\T_{21}|^2}{r_1}\Big)^{2/3}\e^{-2/3}
        \quad(\T_1=0,\ \T_{21}\neq0),
    \end{equation}
    read off from $-L_1^{-1/2}\T_1L_1^{-1/2}=-2\T_1/r_1$ and
    $L_2^{-1/2}\Xi L_2^{-1/2}=2|\T_{21}|^2/r_1$. \Cref{tab:neg_eig} summarizes
    the four regimes.

    This is the minimal example in which each ingredient of $n_1,n_2$ can be
    switched on independently; the off-diagonal entries $\t_{13},\t_{23}$
    (absent when the short edge is a Neumann-terminated stub rather than a loop)
    are exactly what powers the fractional rate. Setting them to zero recovers
    the two-edge case analyzed in \cite{BerBorKin_aamp23}.
\end{ex}

\begin{table}[h]
  \centering
  \setlength{\tabcolsep}{8pt}
  \begin{tabular}{|c|c|c|}
    \hline
    Conditions & $\l\sim-a\,\e^{-1}$ & $\l\sim-a\,\e^{-2/3}$\\
    \hline
    $\T_1<0$ & \checkmark\ \ $a=2|\T_1|/r_1$ & \\
    \hline
    $\T_1=0,\ \T_{21}\neq0$ & & \checkmark\ \ $a=(2|\T_{21}|^2/r_1)^{2/3}$\\
    \hline
    $\T_1=0,\ \T_{21}=0$ & & \\
    \hline
    $\T_1>0$ & & \\
    \hline
  \end{tabular}
  \caption{The vertex conditions of \Cref{ex:simplest} producing an escaping
  negative eigenvalue, summarizing \eqref{eq:exn1n2} and \eqref{eq:excoeff}. The
  last two rows produce no escaping eigenvalue.}
  \label{tab:neg_eig}
\end{table}

To see the fractional rate concretely, we specialize the vertex data and write
the conditions out.

\begin{ex}[A fractional-rate special case]\label{ex:fractional}
    Within \Cref{ex:simplest}, take the vertex operator
    \begin{equation}\label{eq:exfracTheta}
        \T=\begin{pmatrix}0&0&1\\ 0&0&1\\ 1&1&0\end{pmatrix},
    \end{equation}
    so that $\T_1=0$ and $\T_{21}=\sqrt2\neq0$: this is the second row of
    \Cref{tab:neg_eig}, giving no eigenvalue of order $\e^{-1}$ and exactly one
    of order $\e^{-2/3}$. Writing $u$ for the function on the short loop (of
    length $\ell_1=\e r_1$) and $w$ for the function on the lead, the conditions
    $QF'=\T QF$ with $Q=I_3$ read, explicitly,
    \begin{equation}\label{eq:exfracVC}
        u'(0)=w(0),\qquad u'(\ell_1)=-\,w(0),\qquad w'(0)=u(0)+u(\ell_1),
    \end{equation}
    a ``hyperbolic'' coupling of the two loop ends to the lead (derivatives taken
    into the edges).

    We derive the escaping eigenvalue directly, rather than by appeal to
    \Cref{thm:main}. For $\l=-\k^2<0$, solving $-\psi''=\l\psi$ on each edge, the
    decaying solution on the lead is $w(x)=A\,e^{-\k x}$, so $w(0)=A$ and
    $w'(0)=-\k A$. It can be shown that a negative eigenvalue must have an
    eigenfunction symmetric about the midpoint of the loop,\footnote{The
    antisymmetric modes carry Neumann conditions at both ends of the loop and
    vanish on the lead; being oscillatory, they occur only at positive energies
    embedded in the continuous spectrum, not at $\l<0$.} so
    \begin{equation}\label{eq:exusym}
        u(x)=D\cosh\!\big(\k(x-\tfrac{\ell_1}{2})\big),
    \end{equation}
    whence $u(0)=u(\ell_1)=D\cosh(\k\ell_1/2)$ and $u'(0)=-\k D\sinh(\k\ell_1/2)$.
    The first two conditions in \eqref{eq:exfracVC} then coincide and fix
    $w(0)=u'(0)=-\k D\sinh(\k\ell_1/2)$, while the third,
    $w'(0)=u(0)+u(\ell_1)$, that is $-\k\,w(0)=2D\cosh(\k\ell_1/2)$, on cancelling
    $D\neq0$ gives the secular equation
    \begin{equation}\label{eq:exsecular}
        \k^2\tanh\!\Big(\frac{\k\ell_1}{2}\Big)=2 .
    \end{equation}
    Along an escaping branch $\k\to\infty$ while $\k\ell_1=\k\e r_1\to0$ (by the a
    priori bound \eqref{eq:apriori}), so
    $\tanh(\k\ell_1/2)=\tfrac{\k\e r_1}{2}\big(1+o(1)\big)$ and
    \eqref{eq:exsecular} reads $\tfrac12\k^3\e r_1=2\big(1+o(1)\big)$. Hence
    \begin{equation}\label{eq:exfraclam}
        \k=\Big(\frac{4}{r_1}\Big)^{1/3}\e^{-1/3}\big(1+o(1)\big),
        \qquad
        \l=-\k^2=-\Big(\frac{4}{r_1}\Big)^{2/3}\e^{-2/3}\big(1+o(1)\big),
    \end{equation}
    a single negative eigenvalue escaping at the fractional rate, in agreement
    with \Cref{thm:main} and of exactly the type first observed for two-edge
    graphs in \cite{BerBorKin_aamp23}.
\end{ex}

\section{Outline of the Proof}\label{sec:outline}
The argument proceeds in four steps, each occupying a subsection of
\Cref{sec:proof}; we sketch them here, deferring precise definitions to their
proper place.

The starting point (\Cref{sec:DtN}) is the Dirichlet-to-Neumann map of $\cH_\e$.
Because it is assembled edge by edge, it is explicit, and it converts the
eigenvalue problem into a finite-dimensional one: a negative eigenvalue
$\l=-\k^2$ occurs precisely when a fixed $m\times m$ Hermitian matrix
$A(s,t)=st\,\T_V+V^*L(s)V$ becomes singular, where $s=\k\e$ and $t=\k^{-1}$
(\Cref{crl:Lcriterion}). Chasing eigenvalues of $\cH_\e$ to $-\infty$ becomes
chasing the positive solution branches $s=s(t)$ of $\det A(s,t)=0$ as
$t\to0^+$.

Since $A$ is analytic, the Newton--Puiseux theorem already forces every branch
to leave the origin at a power-law rate $s\sim c\,t^\alpha$, and the a priori
bound \eqref{eq:apriori} forces $\alpha\ge1$. The heart of the proof is to
sharpen this to \emph{two} exponents. First (\Cref{subsec:reduction}) two Schur
complements, organized by the order decomposition
$\cL_1\oplus\cL_2\oplus\cL_3$, reduce $\det A=0$ to the equivalent equation
$\det D(s,t)=0$ for a small Hermitian matrix $D(s,t)$ on $X_1$, with a positive
linear-in-$s$ leading part.

The reduced family $D$ is then fed to an \emph{implicit Rellich-type theorem}
(\Cref{sec:rellich}): combining Dencker's Hermitian preparation theorem with
Rellich's analyticity of Hermitian eigenvalues, it shows that the branches
$s=s_k(t)$ are analytic, so $\alpha\in\mathbb N$, and that their signed rates are
exactly the leading orders of the eigenvalues of the single-variable Hermitian
slice $D(0,t)$. Inspecting that slice (\Cref{subsec:tworates-proof}) shows its
negative eigenvalues have order either $t$ (outside $\ker\T_1$) or $t^2$ (on
$\ker\T_1$, outside $\ker\T_{21}$), with nothing in between. This is where the two
rates, and the counts $n_1,n_2$, come from.

Finally (\Cref{subsec:constants}) substituting the ans\"atze $s\sim ct$ and
$s\sim ct^2$ back into $\det D=0$ and separating scales by one more Schur
complement produces the leading coefficients as eigenvalues of the two Hermitian
matrices in \Cref{thm:main}.

\section{The Dirichlet-to-Neumann Map}\label{sec:DtN}
The proof of the main results rests on the DtN map of
$\cH_\e$. Since the underlying material is semi-known, we postpone its
theoretical foundation (boundary triplets, the gamma field, and the Weyl
function) to \Cref{sec:DtNbackground}, and here simply record the explicit form
of the map that we need.

\subsection{Fully disjoint DtN map}
Because the DtN map is constructed starting from pure Dirichlet conditions
that decouple every edge end, it does not see the connectivity of the graph:
it is the DtN map of the \emph{fully disjoint graph}, the disjoint union of
$m_0$ finite edges of lengths $\ell_1,\dots,\ell_{m_0}$ and $m_1$
semi-infinite leads. Consequently it is assembled edge by edge. For a single
finite edge of length $\ell$ we set
\begin{equation}\label{eq:DtNloop}
    \L_\k(\ell) := \frac{\k}{\sinh(\k\ell)}\begin{pmatrix}
        -\cosh(\k\ell) & 1\\
        1 & -\cosh(\k\ell)
    \end{pmatrix},
\end{equation}
and for a single semi-infinite lead,
\begin{equation}\label{eq:DtNlead}
    \L_\k^\infty := -\k .
\end{equation}

\begin{prop}[Fully disjoint DtN map]\label{prop:disjointDtN}
    Let
    \begin{equation}\label{eq:CR}
        \k\in\C_R:=\{z\in\C:\RE z>0\}.
    \end{equation}
    The Dirichlet-to-Neumann map of the fully disjoint graph with finite
    edges of lengths $\ell_1,\dots,\ell_{m_0}$ and $m_1$ semi-infinite leads
    is the $m\times m$ matrix, $m=2m_0+m_1$,
    \begin{equation}\label{eq:disjointDtN}
        \tL\big(\k;\ell_1,\dots,\ell_{m_0}\big)
        = \left(\bigoplus_{j=1}^{m_0}\L_\k(\ell_j)\right)
          \oplus \big(-\k\, I_{m_1}\big)
        = \begin{pmatrix}
            \L_\k(\ell_1) & & & 0\\
            & \ddots & & \\
            & & \L_\k(\ell_{m_0}) & \\
            0 & & & -\k\, I_{m_1}
        \end{pmatrix},
    \end{equation}
    where $\L_\k(\ell)$ and $\L_\k^\infty=-\k$ are given in
    \eqref{eq:DtNloop} and \eqref{eq:DtNlead}, and $I_{m_1}$ is the
    $m_1\times m_1$ identity.
\end{prop}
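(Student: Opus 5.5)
Since the DtN map is defined through boundary value problems with Dirichlet data at every edge end, and the fully disjoint graph imposes no coupling between distinct edges, the map is block diagonal in the ordering of \eqref{eq:DNtrace}. So the proof reduces to two single-edge computations: a finite edge of length $\ell$ and a semi-infinite lead, each at $\k\in\C_R$, i.e.\ spectral parameter $\l=-\k^2\in\C\setminus[0,\infty)$. Here I use the convention of \Cref{sec:DtNbackground}: the DtN map sends the Dirichlet trace $F$ of the solution of $-f''=\l f$ (in $L^2$ on the leads) to its inward Neumann trace $F'$.

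\emph{Finite edge.} First I check unique solvability: $\sinh(\k\ell)=0$ forces $\k\ell\in i\pi\mathbb Z$, impossible when $\RE\k>0$. Given boundary values $f(0)=a$, $f(\ell)=b$, the unique solution of $f''=\k^2 f$ is
\[
f(x)=\frac{a\sinh(\k(\ell-x))+b\sinh(\k x)}{\sinh(\k\ell)}.
\]
Differentiating gives
\[
f'(0)=\frac{\k(-a\cosh(\k\ell)+b)}{\sinh(\k\ell)},\qquad -f'(\ell)=\frac{\k(a-b\cosh(\k\ell))}{\sinh(\k\ell)},
\]
which is exactly the action of $\L_\k(\ell)$ in \eqref{eq:DtNloop} on $(a,b)^T$, with the sign convention of \eqref{eq:DNtrace} for the inward derivative at $x=\ell$.

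\emph{Lead.} The general solution is $\alpha e^{-\k x}+\beta e^{\k x}$. Square integrability with $\RE\k>0$ forces $\beta=0$, so $f(x)=a e^{-\k x}$ and $f'(0)=-\k a$. This gives $\L_\k^\infty=-\k$ as in \eqref{eq:DtNlead}.

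Assembling these blocks in the ordering of $F$ and $F'$ yields \eqref{eq:disjointDtN}. The only delicate point is matching the signs and the ordering conventions of \eqref{eq:DNtrace}, together with checking that the boundary triplet definition in \Cref{sec:DtNbackground} indeed uses $\Gamma_0=F$ and $\Gamma_1=F'$, rather than the opposite sign. There is no real obstacle beyond this bookkeeping.
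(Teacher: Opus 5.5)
Your proposal is correct and is essentially the paper's own proof in \Cref{sec:DtNbackground}: decouple into a direct sum over edges, use the explicit $\sinh$ solution on each finite edge with inward derivatives $\big(u'(0),-u'(\ell)\big)$, and use the decaying exponential $a e^{-\k x}$ on each lead. Your explicit check that $\sinh(\k\ell)\neq0$ for $\RE\k>0$ stands in for the paper's implicit appeal to $-\k^2\in\rho(T_0)$; it is a harmless, slightly more self-contained variant of the same step.
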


The per-edge formulas \eqref{eq:DtNloop} and \eqref{eq:DtNlead} and the assembly
\eqref{eq:disjointDtN} are derived from the boundary-triplet construction in
\Cref{sec:DtNbackground}.

The analytic behavior of the DtN map, combined with the vertex conditions of
$\cH_\e$, reduces the negative-eigenvalue problem to a finite-dimensional one.
Recall from \Cref{subsec:vertexcond} that the vertex conditions are encoded by
the orthogonal projector $Q$ and the self-adjoint operator $\T:\Ran Q\to\Ran Q$,
and that $V$ is the orthonormal frame for $\Ran Q$ of \Cref{subsec:decomp} with
$\T_V:=V^*\T V$.

\begin{thm}[Properties of the DtN map]\label{thm:DtNproperties}
    Write $\tL(\k):=\tL\big(\k;\ell_1,\dots,\ell_{m_0}\big)$ for the DtN map
    \eqref{eq:disjointDtN}, and let $\l=-\k^2$. Then
    \begin{enumerate}[(a)]
        \item $\tL(\k)$ is holomorphic on $\C_R$ and self-adjoint for
        $\l\in\R$ (equivalently, for $\k>0$); moreover, for $\l\in\R$,
        \begin{equation}\label{eq:Mmonotone}
            \frac{d}{d\l}\tL \geq 0,
        \end{equation}
        so $\tL$ is monotone as a function of $\l$.
        \item For every $\k_0\in\C_R$ the following are equivalent:
        \begin{enumerate}[(i)]
            \item $-\k_0^2$ is an eigenvalue of $\cH_\e$ of multiplicity $k$;
            \item $\k_0$ is a root of
            $\k\mapsto\det\big(\T_V-V^*\tL(\k)V\big)$ of multiplicity $k$;
            \item $\dim\ker\big(\T_V-V^*\tL(\k_0)V\big)=k$.
        \end{enumerate}
        \item For any $\l<0$,
        \begin{equation}\label{eq:counting}
            N\big(\cH_\e;(-\infty,\l)\big)=n_-\big(\T_V-V^*\tL(\k)V\big),
            \qquad \l=-\k^2,
        \end{equation}
        where $N\big(\cH_\e;\cdot\big)$ counts eigenvalues, with
        multiplicity, in the given interval and $n_-(\cdot)$ denotes the
        number of negative eigenvalues.
    \end{enumerate}
\end{thm}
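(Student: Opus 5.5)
The plan is to work directly with the explicit matrix $\tL(\k)$ of \Cref{prop:disjointDtN}, together with a standard boundary-triplet/Krein-type argument for part (b).

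\textbf{Part (a).} Holomorphy on $\C_R$ is immediate, since $\sinh(\k\ell)\neq0$ for $\RE\k>0$. The entries $\k\coth(\k\ell)$ and $\k/\sinh(\k\ell)$ are even in $\k$ and meromorphic with poles only on $i\R$, so they are analytic functions of $\l=-\k^2$ off the Dirichlet spectrum. For $\k>0$ they are real, so $\tL(\k)$ is real symmetric, hence self-adjoint.

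For monotonicity I would use the Green-identity formula rather than differentiating by hand. Let $\gamma(\l)h$ denote the solution of $-f''=\l f$ on the disjoint graph with Dirichlet trace $h$ (decaying on the leads). Then $\langle \tL(\l)h,h\rangle=\int |f'|^2-\l|f|^2$ up to sign conventions; with inward derivatives, $F'=\tL F$ gives $\langle F',F\rangle=-\int(|f'|^2+\k^2|f|^2)$. Differentiating in $\l$, or using the standard identity $\tL(\l)-\tL(\mu)^*=(\l-\mu)\gamma(\mu)^*\gamma(\l)$, yields $\frac{d}{d\l}\tL=\gamma(\l)^*\gamma(\l)\ge0$. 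As a sanity check, for the lead $-\k=-\sqrt{-\l}$ is increasing in $\l$.

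\textbf{Part (b).} Any eigenfunction with $\l=-\k_0^2$ solves the edge ODE, so it equals $\gamma(\l)F$ and satisfies $F'=\tL(\k_0)F$. The vertex conditions force $F=VX$ with $X\in\C^{\dim\Ran Q}$, and $QF'=\T QF$ becomes $V^*\tL VX=\T_V X$. The map from eigenfunctions to $X$ is a linear bijection onto $\ker(\T_V-V^*\tL V)$: it is injective because $F=0$ forces $f=0$ when $\RE\k_0>0$, and surjective by constructing $f=\gamma F$. This gives (i)$\Leftrightarrow$(iii).

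For (ii), the root multiplicity equals the kernel dimension. For real $\l$ (i.e., $\k_0>0$), the derivative is positive definite on the kernel by (a), since $\gamma$ is injective. This forces the zero of $\det$ at $\k_0$ to have order exactly $\dim\ker$, by analytic perturbation of Hermitian eigenvalues (Rellich): each vanishing eigenvalue branch has nonzero derivative. For general $\k_0\in\C_R$, self-adjointness of $\cH_\e$ shows that eigenvalues with $\l\notin\R$ do not occur. One then checks that $\det$ has no zeros there, using $\operatorname{Im}\langle(\T_V-V^*\tL V)X,X\rangle=-\operatorname{Im}\l\,\|\gamma VX\|^2\neq0$.

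\textbf{Part (c).} This is the main obstacle: a counting argument by monotonicity. As $\l\to-\infty$, $V^*\tL V$ behaves like $-\k I$ in the leading sense, since $\k\coth(\k\ell)\sim\k$. Hence $\T_V-V^*\tL V\to+\infty$ and $n_-=0$, matching the semibounded spectrum. As $\l$ increases, the eigenvalues of $\T_V-V^*\tL(\l)V$ are nonincreasing by (a), and they cross zero exactly at the eigenvalues of $\cH_\e$, with multiplicity, by (b). A crossing at $\l_0$ contributes $k$ new negative eigenvalues just after $\l_0$. Since the count $N(\cH_\e;(-\infty,\l))$ uses the open interval, this matches $n_-$ at $\l$ itself. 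The delicate points are the strictness of the crossings and the fact that $\tL$ has no poles for $\l<0$, since all Dirichlet eigenvalues are positive; both are handled by the positivity $\gamma^*\gamma>0$ on the relevant kernels.
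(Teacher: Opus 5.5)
Your proposal is correct. Parts (a) and (b) follow the paper's route: the gamma-field identity gives $\frac{d}{d\l}\tL=\g(\l)^*\g(\l)$ for real $\l$, and at real $\k_0$ the vanishing Rellich branches have simple zeros because $V^*\g^*\g V>0$. Two small differences there: you prove (i)$\Leftrightarrow$(iii) by the explicit bijection $f\mapsto V^*F$ instead of quoting the Krein-type formula, and you spell out the non-real $\k_0$ case. Also, the identity should read $\tL(\l)-\tL(\mu)^*=(\l-\bar\mu)\g(\mu)^*\g(\l)$; that is the version your imaginary-part formula actually uses.

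The genuine difference is in (c). The paper cites the Duistermaat-index count for the Lagrangian planes of $T_0$, $\cH_\e$ and the graph of $M_\l$, with \eqref{eq:Mmonotone} identifying the index with $n_-$. You instead give a self-contained spectral-flow argument:
\begin{enumerate}[(1)]
\item $\T_V-V^*\tL(\l)V$ is positive for $\l\ll0$;
\item its analytic eigenvalue branches are strictly decreasing on $(-\infty,0)$, since $\g(\l)$ is injective;
\item by (b), exactly $k$ branches change sign at an eigenvalue of multiplicity $k$, and a branch vanishing at $\l_0$ is not yet negative there, which matches the open interval.
\end{enumerate}
The citation is shorter and places the count in a general framework that needs no large-$\k$ information about $\tL$. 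Your route uses only (a), (b) and the explicit form of $\tL$. As a bonus, its starting step, quantified via $-\tL\ge\k\tanh(\k\e\min_j r_j/2)\,I$, shows that $\cH_\e$ is bounded below and even reproduces the a priori bound \eqref{eq:apriori}.
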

The proof of \Cref{thm:DtNproperties} is given in \Cref{sec:DtNbackground}.

\subsection{Rescaling the DtN map}
\label{subsec:rescale}
We now introduce the small parameter through $\ell_j=\e r_j$
(see \eqref{eq:edgelength}) and set
\begin{equation}\label{eq:sdef}
    s:=\k\e .
\end{equation}
Each finite-edge block $\L_\k(\ell_j)=\L_\k(\e r_j)$ is diagonalized by the
symmetric and antisymmetric loop directions $u_1,u_2$ of \eqref{eq:loopeigvec},
with eigenvalues
\begin{equation}\label{eq:loopeigval}
    \L_\k(\e r_j)\,u_1=-\frac{s}{\e}\tanh\!\Big(\frac{s r_j}{2}\Big)u_1,
    \qquad
    \L_\k(\e r_j)\,u_2=-\frac{s}{\e}\coth\!\Big(\frac{s r_j}{2}\Big)u_2,
\end{equation}
while each semi-infinite lead contributes $-\k=-s/\e$. Factoring out $-1/\e$,
all $\e$-dependence collapses into $s$, and we write
\begin{equation}\label{eq:Ldef}
    \tL\big(\k;\{\e r_j\}\big)=-\frac{1}{\e}\,L(s),
    \qquad
    L(s):=-\e\, \tL\big(\k;\{\e r_j\}\big).
\end{equation}

The three families of eigen-directions (the even loop directions $u_1$,
the leads, and the odd loop directions $u_2$) organize $L(s)$ according to
their order in $s$:
\begin{equation}\label{eq:Ldecomp}
    L(s)=s^2 J_1(s)+s\,J_2+J_3(s),
\end{equation}
where $J_2=\hat I=0_{2m_0\times 2m_0}\oplus I_{m_1}$ is the (constant) orthogonal
projector onto the lead directions, and $J_1,J_3$ are holomorphic near $s=0$ with
\begin{equation}\label{eq:Jdef}
    J_1(s)=\bigg(\bigoplus_{j=1}^{m_0}\frac{\tanh(s r_j/2)}{s}\,u_1u_1^*\bigg)
    \oplus 0_{m_1},
    \qquad
    J_3(s)=\bigg(\bigoplus_{j=1}^{m_0} s\coth\!\Big(\frac{s r_j}{2}\Big)\,u_2u_2^*\bigg)
    \oplus 0_{m_1}.
\end{equation}
At $s=0$,
\begin{equation}\label{eq:J0}
    J_1(0)=\bigg(\bigoplus_{j=1}^{m_0}\frac{r_j}{2}\,u_1u_1^*\bigg)\oplus 0_{m_1}=R,
    \qquad
    J_3(0)=\bigg(\bigoplus_{j=1}^{m_0}\frac{2}{r_j}\,u_2u_2^*\bigg)\oplus 0_{m_1},
\end{equation}
so that the leading loop coefficient $J_1(0)$ is exactly the form $R$ of
\eqref{eq:Rdef}. The ranges of $J_1,J_2,J_3$ are the subspaces $\cL_1,\cL_2,\cL_3$
of \eqref{eq:cLdef} (the symmetric loop directions, the leads, and the
antisymmetric loop directions), mutually orthogonal and spanning the whole trace
space; for small $s>0$ the matrices $J_1(s)$ and $J_3(s)$ are positive definite
on $\cL_1$ and $\cL_3$.

Combining the rescaling \eqref{eq:Ldef} with \Cref{thm:DtNproperties}
recasts the negative-eigenvalue criterion and the counting formula directly
in terms of $L(s)$.

\begin{crl}\label{crl:Lcriterion}
    Let $s=\k\e$ and $t=\k^{-1}$, and define
    \begin{equation}\label{eq:Adef}
        A(s,t):=st\,\T_V+V^*L(s)V .
    \end{equation}
    Then, for $\l=-\k^2<0$, $\l$ is an eigenvalue of $\cH_\e$ of multiplicity
    $k$ if and only if $\det A(s,t)=0$, in which case $k=\dim\ker A(s,t)$; that
    is, the negative eigenvalues of $\cH_\e$ are exactly the values $\l=-\k^2$
    for which $A(s,t)$ is singular.
  \end{crl}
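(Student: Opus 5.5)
The corollary is a rescaling of \Cref{thm:DtNproperties}(b). The one point that needs care is that multiplicity is measured by the kernel dimension, not by the order of a root in the new variables.

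Fix $\e>0$ and $\l=-\k^2<0$ with $\k>0$. By \Cref{thm:DtNproperties}(b), the equivalence (i)$\Leftrightarrow$(iii), $\l$ is an eigenvalue of $\cH_\e$ of multiplicity $k$ if and only if
\[
\dim\ker\bigl(\T_V-V^*\tL(\k)V\bigr)=k .
\]
Now substitute the rescaling \eqref{eq:Ldef}, which gives $\tL(\k;\{\e r_j\})=-\e^{-1}L(s)$ with $s=\k\e$. Then
\[
\T_V-V^*\tL(\k)V=\T_V+\e^{-1}V^*L(s)V .
\]
Multiply by the nonzero scalar $\e\k^{-1}\cdot\k=\e$, and rewrite $\e$ as $st$ using $s=\k\e$ and $t=\k^{-1}$. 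This gives
\[
\e\bigl(\T_V-V^*\tL(\k)V\bigr)=st\,\T_V+V^*L(s)V=A(s,t).
\]
Multiplying by a nonzero scalar leaves the kernel unchanged. Hence $\dim\ker A(s,t)=\dim\ker(\T_V-V^*\tL(\k)V)=k$.

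In particular, $A(s,t)$ is singular exactly when $\l$ is an eigenvalue, so $\det A(s,t)=0$ characterizes the negative eigenvalues. This uses only equivalence (iii), so I never need to track the multiplicity of roots in $(s,t)$.

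There is one minor point to check. $L(s)$ is defined through the $\e$-independent functions $J_1,J_2,J_3$ of \eqref{eq:Ldecomp}, so I should confirm that the identity $L(s)=-\e\tL$ holds for every $\k>0$ and not only near $s=0$. This follows directly from \eqref{eq:loopeigval}: factoring $-s/\e$ out of each block gives the $\tanh$ and $\coth$ expressions appearing in \eqref{eq:Jdef}. The map $(\e,\k)\mapsto(s,t)$ is a bijection of $(0,\infty)^2$, so every negative eigenvalue corresponds to exactly one pair $(s,t)$.

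None of these steps is a real obstacle. The substantive content is contained in \Cref{thm:DtNproperties}, which is assumed.
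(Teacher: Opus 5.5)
Your proposal is correct and is essentially the paper's own argument. The paper also substitutes $\tL(\k)=-\e^{-1}L(s)$, observes that $A(s,t)=\e\bigl(\T_V-V^*\tL(\k)V\bigr)$ with $\e=st>0$ so that the kernels agree, and then invokes \Cref{thm:DtNproperties}(b); your extra checks (that $-\e\tL$ depends on $(\k,\e)$ only through $s$, and that $(\e,\k)\mapsto(s,t)$ is a bijection of $(0,\infty)^2$) are harmless points the paper leaves implicit.
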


\begin{proof}
    By \eqref{eq:Ldef} we have $\tL(\k)=-\tfrac1\e L(s)$, so
    \begin{equation}
        \T_V-V^*\tL(\k)V=\T_V+\tfrac1\e V^*L(s)V .
    \end{equation}
    Since $\e=st>0$, multiplying by $\e$ scales this Hermitian matrix by a
    positive constant, so
    \begin{equation}
        A(s,t)=st\,\T_V+V^*L(s)V=\e\big(\T_V-V^*\tL(\k)V\big)
    \end{equation}
    has the same kernel as $\T_V-V^*\tL(\k)V$, and $\det A(s,t)=0$ if and only if
    $\det\big(\T_V-V^*\tL(\k)V\big)=0$. The claim now follows from
    \Cref{thm:DtNproperties}(b).
\end{proof}

\section{An implicit Rellich-type theorem}\label{sec:rellich}
The second of our two tools is a stability result for the eigenvalue
branches of a Hermitian analytic family. We state it in general form,
as it may be useful elsewhere; it is applied twice to the reduced
matrix $D(s,t)$ in \Cref{sec:proof}.

\begin{thm}[Implicit Rellich theorem]
  \label{thm:perturbedRellich}
  Let $F(\lambda,t)$ be a real-analytic $d\times d$ matrix-valued function,
  Hermitian for real $(\lambda,t)$, with
  \begin{equation}\label{eq:PRhyp}
    F(0,0)=0, \qquad \partial_\lambda F(0,0)>0 .
  \end{equation}
  Then there exist real-analytic functions $\lambda_1(t),\dots,\lambda_d(t)$
  with $\lambda_k(0)=0$ such that, in a neighborhood of the origin,
  \begin{equation}\label{eq:PRbranches}
    \det F(\lambda,t)=0
    \quad\Longleftrightarrow\quad
    \lambda\in\{\lambda_1(t),\dots,\lambda_d(t)\}.
  \end{equation}
  Moreover, with the derivatives $\partial_\lambda F$ and $\partial_t F$
  evaluated at the origin:
  \begin{enumerate}[(a)]
  \item for all sufficiently small $t>0$, the number of positive branches
    equals the negative inertia of $F(0,t)$,
    \begin{equation}\label{eq:PRcount}
      \#\{k:\lambda_k(t)>0\}=n_-\big(F(0,t)\big);
    \end{equation}
  \item the rates $\{p_k\}\subset \mathbb{N}$ occurring among the branches
    $\lambda_k\sim c_k\,t^{p_k}$ are, with multiplicity, the rates occurring
    among the eigenvalues of $F(0,t)$;
  \item \label{item:slopes} the slopes $\lambda_1'(0),\dots,\lambda_d'(0)$ are,
    with multiplicity, the eigenvalues of
    \begin{equation}
      \label{eq:HFimplicit}
      -(\partial_\lambda F)^{-1/2}\,\partial_t F\,(\partial_\lambda F)^{-1/2},
    \end{equation}
    or, equivalently, the roots $\rho$ of
    $\det(\rho\,\partial_\lambda F+\partial_t F)=0$.
  \end{enumerate}
\end{thm}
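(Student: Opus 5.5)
The approach is to invoke Dencker's Hermitian preparation theorem \cite{Den26}. Under the hypotheses \eqref{eq:PRhyp} it gives a factorization
\begin{equation}\label{eq:PRfact}
  F(\lambda,t)=E(\lambda,t)^*\big(\lambda I_d-H(t)\big)E(\lambda,t)
\end{equation}
near the origin. Here $E$ is real-analytic and invertible with $E(0,0)=(\partial_\lambda F(0,0))^{1/2}$, and $H(t)$ is a real-analytic Hermitian family with $H(0)=0$. (One could try to build this from the scalar Weierstrass preparation theorem applied to $\det F$. The point of Dencker's result, however, is that the Weierstrass polynomial is realized as the characteristic polynomial of a Hermitian analytic family, so we take it as given.) Once \eqref{eq:PRfact} is available, $\det F=|\det E|^2\det(\lambda-H(t))$. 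The solutions of $\det F=0$ are then exactly the eigenvalues of $H(t)$. By Rellich's theorem these can be chosen as real-analytic branches $\lambda_1(t),\dots,\lambda_d(t)$ with $\lambda_k(0)=0$, which proves \eqref{eq:PRbranches}.

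For (a), set $\lambda=0$ in \eqref{eq:PRfact}: $F(0,t)=-E(0,t)^*H(t)E(0,t)$. By Sylvester's law of inertia, $n_-(F(0,t))=n_+(H(t))=\#\{k:\lambda_k(t)>0\}$. For (b), the same congruence is used more quantitatively. Write $F(0,t)=-E_t^*H(t)E_t$ with $E_t:=E(0,t)\to E(0,0)$ invertible. For Hermitian $H$ and invertible $E_t$, Ostrowski's theorem says the $k$-th ordered eigenvalue of $E_t^*HE_t$ equals $\theta_k$ times the $k$-th ordered eigenvalue of $H$, with $\theta_k\in[\sigma_{\min}(E_t)^2,\sigma_{\max}(E_t)^2]$ bounded away from $0$ and $\infty$. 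Each eigenvalue branch of $F(0,t)$ (again analytic by Rellich) is therefore comparable, with the same sign, to the corresponding branch of $-H(t)$. Since analytic branches not identically zero have integer orders $t^{p}$, comparability forces the same $p_k$ with multiplicity. Branches that vanish identically correspond on both sides. Sorting the branches by value for small $t>0$ handles crossings, since analytic branches are eventually totally ordered. A sign must be fixed here: the rates are compared as orders of magnitude, and the sign flip between $\lambda_k$ and the eigenvalues of $F(0,t)$ is the one already seen in (a).

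For (c), differentiate \eqref{eq:PRfact} at the origin. The $\lambda$-derivative gives $\partial_\lambda F=E(0,0)^*E(0,0)$. The $t$-derivative gives $\partial_tF=-E(0,0)^*H'(0)E(0,0)$, because the terms involving $\partial_t E$ are multiplied by $\lambda I-H(0)=0$. Hence $H'(0)=-(\partial_\lambda F)^{-1/2}\partial_tF(\partial_\lambda F)^{-1/2}$, after possibly replacing $E(0,0)$ by the positive square root via a unitary factor, which does not affect the spectrum. By Rellich, the slopes $\lambda_k'(0)$ are the eigenvalues of $H'(0)$. The equivalent characterization $\det(\rho\,\partial_\lambda F+\partial_tF)=0$ follows by congruence.

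The main obstacle is the factorization \eqref{eq:PRfact} itself. The rest is inertia bookkeeping and uniform comparability of eigenvalue orders under an analytic congruence with invertible limit, where care is needed only with branch crossings and identically vanishing branches.
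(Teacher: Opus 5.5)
Your proposal is correct and follows the paper's proof: Dencker's Hermitian preparation theorem reduces the problem to $\lambda I-H(t)$, Rellich's theorem gives the analytic branches, and the congruence $F(0,t)=-E(0,t)^*H(t)E(0,t)$ with Sylvester/Ostrowski gives (a) and (b). The only minor difference is in (c): you differentiate the factorization at the origin to see that $H'(0)$ is unitarily similar to $-(\partial_\lambda F)^{-1/2}\partial_tF(\partial_\lambda F)^{-1/2}$ and then invoke the degenerate Hellmann--Feynman/Kato fact, whereas the paper substitutes $\lambda=\rho t$ into $\det F=|\det U|^2\det(\lambda I-M(t))$ and passes to the limit to get $\det(\rho\,\partial_\lambda F+\partial_tF)=\det(\partial_\lambda F)\prod_k(\rho-\lambda_k'(0))$; both routes work.
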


\begin{rmk}[Why ``Implicit Rellich'']
  \label{rmk:whyrellich}
  When
  $F(\lambda,t)=\lambda I-M(t)$, i.e. linear in $\lambda$ with
  $\partial_\lambda F$ equal to the identity, the equivalence
  \eqref{eq:PRbranches} is nothing but Rellich's theorem: the branches are
  the eigenvalues of the Hermitian
  one-parameter family $M(t)$, analytic by
  \cite[Thm.~II.6.1]{Kato_perturbation}, \cite[Sec.~3.5.1, Thm.~1]{Baumgartel_PertTheory}. The
  general hypothesis \eqref{eq:PRhyp} (an arbitrary positive leading coefficient
  together with higher-order terms in $\lambda$) is a perturbation of this normal
  form; the proof below strips that perturbation away, through Dencker's Hermitian
  preparation theorem, and returns the problem to the Rellich situation.

  Furthermore, the condition $F(0,0)=0$ translates into $M(0)=0$, and
  the situation is that of ``degenerate perturbation theory'',
  with the multiple eigenvalue $0$ splitting into $d$ branches whose
  slopes are the eigenvalues of $M'(0)$.  This result is known as
  the Hellmann--Feynman formula to physicists and chemists, and as the Kato
  selection theorem \cite[Thm.~II.5.4]{Kato_perturbation} to
  mathematicians.  Part \ref{item:slopes} of
  \Cref{thm:perturbedRellich} generalizes Hellmann--Feynman--Kato to
  our implicit situation, and the answer takes the form of the
  classical implicit function theorem: minus the $t$-derivative divided
  by the $\lambda$-derivative.
\end{rmk}

\begin{proof}
    By Dencker's Hermitian preparation theorem \cite[Thm.~1.1, case
    $m=1$]{Den26} (there ``symmetric'' means self-adjoint,
    $A^*=\overline A^{\,t}$), the hypotheses \eqref{eq:PRhyp} yield, near the
    origin, a real-analytic $U(\lambda,t)$ with $U(0,0)$ invertible and a
    real-analytic Hermitian $M(t)$ with $M(0)=0$ such that
    \begin{equation}\label{eq:PRfact}
        F(\lambda,t)=U(\lambda,t)\big(\lambda I-M(t)\big)U^*(\lambda,t).
    \end{equation}
    The benefit of this factorization is that we may now analyze the inner factor
    $\lambda I-M(t)$, which is \emph{linear} in $\lambda$. Taking determinants and
    using $\det U\neq0$ near the origin,
    \begin{equation}\label{eq:PRdet}
        \det F(\lambda,t)=|\det U(\lambda,t)|^2\,\det\big(\lambda I-M(t)\big),
    \end{equation}
    so the branches are the eigenvalues $\lambda_1(t),\dots,\lambda_d(t)$ of
    $M(t)$. As $M$ is Hermitian and real-analytic in the single variable $t$,
    Rellich's theorem \cite[Thm.~II.6.1]{Kato_perturbation} gives an analytic
    labeling; hence each $\lambda_k(t)$ is real-analytic with $\lambda_k(0)=0$,
    proving \eqref{eq:PRbranches}.

    Setting $\lambda=0$ in \eqref{eq:PRfact} gives
    $F(0,t)=-U(0,t)M(t)U(0,t)^*$, so $M(t)$ is congruent to
    $-F(0,t)$ through the invertible factor $U(0,t)$. Writing
    $\nu_1(t),\dots,\nu_d(t)$ for the eigenvalues of $F(0,t)$,
    Ostrowski's theorem \cite[Thm.~4.5.9]{HornJohnson} gives
    \begin{equation}\label{eq:PRostrowski}
        \lambda_k(t)=-\theta_k(t)\,\nu_k(t),\qquad
        0<c\le\theta_k(t)\le C<\infty ,
    \end{equation}
    the bounds holding as $t\to0$ since $U(0,0)$ is invertible. Thus each branch
    $\lambda_k$ and the eigenvalue $\nu_k$ have opposite sign and the same
    leading order, which is (b), and counting signs gives (a):
    \begin{equation}\label{eq:PRcountproof}
        \#\{k:\lambda_k(t)>0\}=\#\{k:\nu_k(t)<0\}=n_-\big(F(0,t)\big).
    \end{equation}

    For (c) we extract the first-order behavior from \eqref{eq:PRdet}: substitute
    $\lambda=\rho t$ and divide by $t^d$. Since
    $F(\rho t,t)=t\big(\rho\,\partial_\lambda F+\partial_t F\big)+o(t)$ (the
    derivatives at the origin) and
    $\lambda_k(t)/t\to\lambda_k'(0)$, letting $t\to0^+$ gives the polynomial
    identity
    \begin{equation}\label{eq:PRslopes}
        \det\big(\rho\,\partial_\lambda F+\partial_t F\big)
        =\det(\partial_\lambda F)\,\prod_{k=1}^d\big(\rho-\lambda_k'(0)\big),
    \end{equation}
    the leading coefficients agreeing because $U(0,0)U(0,0)^*=\partial_\lambda F$.
    As $\partial_\lambda F>0$, the roots of the left-hand side are the eigenvalues
    of $-(\partial_\lambda F)^{-1/2}\,\partial_t F\,(\partial_\lambda F)^{-1/2}$,
    so by \eqref{eq:PRslopes} these coincide, with multiplicity, with the slopes
    $\lambda_k'(0)$, which is (c).
\end{proof}

\section{Proof of the Main Result}\label{sec:proof}
Throughout this section we use the criterion of \Cref{crl:Lcriterion}: for
$\l=-\k^2<0$, the negative eigenvalues of $\cH_\e$ are exactly those $\l$ for
which
\begin{equation}\label{eq:proofcriterion}
  \det A(s,t) =
  \det \Big( st\,\T_V+V^*L(s)V\Big)=0,
  \qquad s=\k\e = \e\sqrt{-\lambda},
  \quad t=\k^{-1} = \frac1{\sqrt{-\lambda}}.
\end{equation}

\subsection{Rescaled eigenvalue branches}
Consider any continuous negative eigenvalue branch $\l(\e)=-\k(\e)^2$
of $\cH_\e$. Because $\l<0$, the quantity $\k=\sqrt{-\l}$ is real and
\emph{positive}, so both rescaled variables are positive,
\begin{equation}\label{eq:pos}
    t=\k^{-1}>0 \qquad\text{and}\qquad s=\k\e>0.
\end{equation}
The eigenvalue escapes to $-\infty$, i.e.\ $\l(\e)\to-\infty$ as $\e\to0^+$,
precisely when $\k\to+\infty$, that is, when $t\to0^+$. Moreover, the a priori
lower bound \eqref{eq:apriori}, $\l\gtrsim-\e^{-1}$ (equivalently
$\k^2\e\lesssim1$), translates into
\begin{equation}\label{eq:aprioritst}
    \frac{s}{t}=\k^2\e\lesssim 1,\qquad\text{i.e.}\qquad s\lesssim t,
\end{equation}
so along such a branch $s\to0^+$ as well.

Conversely, a positive solution $(s,t)$ of $\det A(s,t)=0$ with $s,t\to0^+$
determines, through $\k=t^{-1}$ and $\e=st$, a negative eigenvalue
$\l=-t^{-2}$ of $\cH_\e$ at parameter value $\e=st$, and this eigenvalue
escapes to $-\infty$ as $t\to0^+$. We have therefore shown that
\begin{quote}
\emph{tracking the negative eigenvalues of $\cH_\e$ that escape to $-\infty$
as $\e\to0^+$ is equivalent to tracking the positive solution branches
$s=s(t)$, with $s,t>0$, of $\det A(s,t)=0$ as $t\to0^+$.}
\end{quote}

\subsection{A priori power-law rate}
Since $L(s)$ is holomorphic near $s=0$ by \eqref{eq:Jdef}, the function
\begin{equation}\label{eq:Fdef}
    F(s,t):=\det A(s,t)
\end{equation}
is holomorphic in a neighborhood of $(0,0)$. It is not identically zero: for
each fixed $\e>0$ the negative spectrum of $\cH_\e$ is discrete, so $F$ cannot
vanish on a full neighborhood of the origin. By the Newton--Puiseux theorem
for holomorphic functions \cite[Sec.~8.3]{BK86}, the zero set of $F$ near the
origin is a finite union of branches, each of which admits a convergent
Puiseux parametrization. In particular, every solution branch approaching the
origin does so at a power-law rate: there exist
$\alpha\in\mathbb{Q}_{>0}$ and $c\neq0$ such that
\begin{equation}\label{eq:puiseux}
    s(t)=c\,t^{\alpha}\big(1+o(1)\big),\qquad t\to0^+.
\end{equation}
For the positive branches of interest we have $c>0$, and the a priori bound
\eqref{eq:aprioritst} forces the leading exponent to satisfy
\begin{equation}\label{eq:alphage1}
  \alpha\geq1.
\end{equation}

Using the particular structure of $A(s,t)$ we will establish more than the power-law form \eqref{eq:puiseux}:
each solution branch $s=s(t)$ is an \emph{analytic} function of $t$ at
$t=0$ (and, in particular, $\alpha \in \mathbb{N}$), and only two
possible exponents can occur,
\begin{equation}\label{eq:tworates}
  s\sim t\quad(\alpha=1)
  \qquad\text{and}\qquad
  s\sim t^{2}\quad(\alpha=2).
\end{equation}

\subsection{Reduction of the determinant equation}\label{subsec:reduction}
To analyze $F(s,t)=\det A(s,t)$ we use the orthogonal decomposition
$\Ran Q=X_1\oplus X_2\oplus X_3$ of \Cref{lem:decomp}, its aligned frame
$V=\begin{pmatrix}V_1&V_2&V_3\end{pmatrix}$, and the blocks $\T_1,\T_{21},L_1$ of
\eqref{eq:blocks} (with $L_1>0$ and $V_1^*\T V_2=\T_{21}^*$). We first supply the
deferred proof of the decomposition.

\begin{proof}[Proof of \Cref{lem:decomp}]
    We use the elementary identity: for subspaces $U,W$ of a Hilbert space $H$,
    \begin{equation}\label{eq:intdecomp}
        W=(W\cap U)\oplus\big(W\cap(W\cap U)^\perp\big),
    \end{equation}
    which holds since $W=W\cap\big((W\cap U)\oplus(W\cap U)^\perp\big)$ and
    $W\cap U\subseteq W$. Apply it with $W=\Ran Q\cap(\cL_1\oplus\cL_2)$ and
    $U=\cL_1$. Since $\cL_1\subseteq\cL_1\oplus\cL_2$,
    \begin{equation}\label{eq:X1inter}
        \big(\Ran Q\cap(\cL_1\oplus\cL_2)\big)\cap\cL_1=\Ran Q\cap\cL_1=X_1,
    \end{equation}
    so that \eqref{eq:intdecomp} gives
    \begin{equation}\label{eq:decompL2}
        \Ran Q\cap(\cL_1\oplus\cL_2)
        =X_1\oplus\big(\Ran Q\cap(\cL_1\oplus\cL_2)\cap X_1^\perp\big)
        =X_1\oplus X_2 .
    \end{equation}
    Since $X_1\oplus X_2\subseteq\Ran Q$, the definition
    $X_3=\Ran Q\cap(X_1\oplus X_2)^\perp$ makes
    $\Ran Q=(X_1\oplus X_2)\oplus X_3$ an orthogonal decomposition; with
    $X_2\subseteq X_1^\perp$ this is the orthogonal splitting
    \eqref{eq:RanQdecomp}.
\end{proof}

We now reduce the determinant equation \eqref{eq:proofcriterion} to a smaller
one, supported on $X_1$.
\begin{prop}\label{prop:reduction}
    There are real-analytic Hermitian matrix functions $E_2(s,t)$ and
    $E_1(s,t)$, defined near $(0,0)$ and vanishing at $(0,0)$, such
    that, introducing
    $ D(s,t): X_1\to X_1$,
    \begin{equation}\label{eq:D4}
        D(s,t)=t\,\T_{1}-t^2\,\T_{21}^*\big(I+E_2(s,t)\big)\T_{21}
        +s\big(L_1+E_1(s,t)\big),
    \end{equation}
    the equivalence
    \begin{equation}\label{eq:detequiv}
        \det A(s,t)=0
        \quad\Longleftrightarrow\quad
        \det D(s,t)=0
    \end{equation}
    holds for all sufficiently small $s,t>0$.
  \end{prop}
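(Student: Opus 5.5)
The plan is to obtain \eqref{eq:D4} from two successive Schur complements of $A(s,t)$, using the frame $V=(V_1\ V_2\ V_3)$ and the splitting \eqref{eq:Ldecomp}. I first record which pieces of $L(s)=s^2J_1(s)+sJ_2+J_3(s)$ survive on each block. Since $X_1\subseteq\cL_1$, the block $V_1^*(\cdot)V_1$ sees only $J_1$. Since $X_1\oplus X_2\subseteq\cL_1\oplus\cL_2$, both $J_3V_1$ and $J_3V_2$ vanish. Hence
\begin{align*}
A_{11}&=st\,\T_1+s^2V_1^*J_1(s)V_1,\qquad A_{21}=st\,\T_{21}+s^2V_2^*J_1(s)V_1,\\
A_{22}&=s\big(P+t\,V_2^*\T V_2+s\,V_2^*J_1(s)V_2\big),\qquad P:=V_2^*J_2V_2,\\
A_{i3}&=s\big(t\,V_i^*\T V_3+V_i^*J_2V_3+s\,V_i^*J_1(s)V_3\big)\quad(i=1,2),\\
A_{33}&=V_3^*J_3(s)V_3+s\,V_3^*J_2V_3+O(s^2)+O(st).
\end{align*}

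\emph{Step 1 (eliminate $X_3$).} I claim $V_3^*J_3(0)V_3>0$. Indeed, $J_3(0)$ is positive on $\cL_3$ and its kernel is $\cL_1\oplus\cL_2$. A vector $x\in X_3\cap(\cL_1\oplus\cL_2)$ lies in $\Ran Q\cap(\cL_1\oplus\cL_2)=X_1\oplus X_2$ by \eqref{eq:decompL2}, and also in $(X_1\oplus X_2)^\perp$, so $x=0$. Thus $A_{33}$ is invertible near the origin, and
\[
\det A=\det A_{33}\,\det S,\qquad S:=A_{\{12\}}-A_{\{12\},3}A_{33}^{-1}A_{3,\{12\}}.
\]
Every entry of $A_{\{12\},3}$ carries a factor $s$, so the correction to the $X_1\oplus X_2$ block is $s^2\,O(1)$ and real-analytic and Hermitian. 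On the $(2,2)$ block it is absorbed as $s\cdot O(s)$. The worry is the $(1,1)$ block, where an $O(s^2)$ term competes with $s^2L_1$. A closer look shows that $A_{13}=s\,O(t+s)$, since $V_1^*J_2=0$ because $X_1\perp\cL_2$. So the $(1,1)$ correction is $s^2O((t+s)^2)=s\cdot o(1)\cdot s$ and goes into $sE_1$. The $(1,2)$ correction is $s^2O(t+s)$, which is harmless.

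\emph{Step 2 (eliminate $X_2$).} The matrix $P$ is positive definite on $X_2$. A vector of $X_2$ annihilated by $J_2$ lies in $\cL_1\cap\Ran Q\cap X_1^\perp=X_1\cap X_1^\perp=0$. So $S_{22}=s(P+O(s+t))$ is invertible for small $s>0$, and
\[
\det S=\det S_{22}\,\det\big(S_{11}-S_{12}S_{22}^{-1}S_{21}\big).
\]
Writing $S_{21}=s\big(t\T_{21}+s\,B(s,t)\big)$, we get
\[
S_{12}S_{22}^{-1}S_{21}=s\,(t\T_{21}+sB)^*\big(P+O(s+t)\big)^{-1}(t\T_{21}+sB).
\]
The terms $st\cdot O(1)$ and $s^2O(1)$ are absorbed into $sE_1$. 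The remaining term is $s\,t^2\,\T_{21}^*\big(P^{-1}+O(s+t)\big)\T_{21}$. Dividing the whole complement by $s>0$ gives a Hermitian $D(s,t)$ of the form \eqref{eq:D4}. The equivalence \eqref{eq:detequiv} then follows from
\[
\det A=\det A_{33}\,\det S_{22}\,s^{\dim X_1}\det D,
\]
in which every prefactor is nonzero for small $s,t>0$.

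\emph{Main obstacle.} The delicate point is the exact quadratic coefficient: the computation yields $\T_{21}^*P^{-1}(I+\cdots)\T_{21}$, while \eqref{eq:D4} asserts $I+E_2$ with $E_2(0,0)=0$. That requires $P=V_2^*J_2V_2=I$, that is, $X_2\subseteq\cL_2$. I would first try to prove this from the definitions: $X_2$ is orthogonal to $X_1$ inside $\Ran Q\cap(\cL_1\oplus\cL_2)$, and its $\cL_1$-component should be shown to vanish when it is coupled to $X_1$ through $\T$. If that fails in general (mixed vectors in $X_2$ seem possible), I would retain $P^{-1}$ by replacing $\T_{21}$ with $P^{-1/2}\T_{21}$, equivalently by renormalizing the frame $V_2$ so that $V_2^*J_2V_2=I$, and check that the later statements in \eqref{eq:kerblocks} are unaffected. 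The remaining bookkeeping only needs care in keeping every absorbed error term real-analytic and Hermitian and vanishing at $(0,0)$.
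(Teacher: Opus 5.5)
Your route is the paper's: a Schur complement against $X_3$, then one against $X_2$, then division by $s$. Your bookkeeping is sound, and in places more careful than the paper's. You actually prove $V_3^*J_3(0)V_3>0$ and $P>0$, and you see that the $(1,1)$ correction is harmless only because $V_1^*J_2=0$.

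The obstacle you flag is genuine, but it is an error in the statement, not a gap in your argument. The paper's proof simply writes the $X_2$-block in \eqref{eq:Ablocks} as $s(I+o(1))$, which presumes $X_2\subseteq\cL_2$. That inclusion is false in general, so drop the attempt to derive it.

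Here is a counterexample. Take $m_0=2$, $m_1=1$, $\Ran Q=\operatorname{span}\{v_1,v_2\}$ with $v_1=(1,1,0,0,0)^T/\sqrt2$ and $v_2=(0,0,1,1,1)^T/\sqrt3$. Take $\T_V=\left(\begin{smallmatrix}0&\bar\tau\\ \tau&0\end{smallmatrix}\right)$ with $\tau\neq0$. Then $X_1=\operatorname{span}v_1$, $X_2=\operatorname{span}v_2$, $X_3=0$, $\T_1=0$, $\T_{21}=\tau$, $L_1=r_1/2$ and $P=1/3$. A direct computation gives
\begin{equation*}
  \det A(s,t)=s^2\Big[\tanh\big(\tfrac{sr_1}{2}\big)\Big(\tfrac13+\tfrac23\tanh\big(\tfrac{sr_2}{2}\big)\Big)-t^2|\tau|^2\Big],
\end{equation*}
whose zero set in $s,t>0$ is the curve $s=\frac{6|\tau|^2}{r_1}t^2(1+o(1))$.

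Any scalar $D$ of the form \eqref{eq:D4} has $\partial_sD(0,0)=r_1/2$. By the implicit function theorem it therefore vanishes near the origin exactly on a curve $s=\frac{2|\tau|^2}{r_1}t^2(1+o(1))$. Hence \eqref{eq:detequiv} fails for every admissible choice of $E_1,E_2$, and the proposition as written is false. Solving the eigenvalue problem on the graph directly gives the same secular equation, with $\k^3\e\to6|\tau|^2/r_1$.

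Your fallback is the correct statement, and your argument proves it. Write $S_{22}=s(P+Y)$ with $Y$ Hermitian, analytic and $Y(0,0)=0$. The identity $(P+Y)^{-1}=P^{-1/2}\big(I+P^{-1/2}YP^{-1/2}\big)^{-1}P^{-1/2}$ then puts your Schur complement exactly in the form \eqref{eq:D4}, with two changes:
\begin{enumerate}[(i)]
\item $\T_{21}$ is replaced by $P^{-1/2}\T_{21}$, or equivalently one uses the frame $V_2P^{-1/2}$;
\item $E_2$ is Hermitian, analytic and vanishes at the origin.
\end{enumerate}

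Downstream, $\Xi$ in \eqref{eq:kerblocks} must become $W_2^*\T_{21}^*P^{-1}\T_{21}W_2$. The count $n_2$ is unchanged, since the kernel is still $\ker\T_1\cap\ker\T_{21}$. The coefficients of the $\e^{-2/3}$ family do change: in the example they are $(6|\tau|^2/r_1)^{2/3}$, not $(2|\tau|^2/r_1)^{2/3}$. In the paper's own examples $X_2=\cL_2$, so $P=I$ and nothing changes there.
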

  
\begin{proof}
We partition $A(s,t)=st\,\T_V+V^*L(s)V$ in the aligned frame
$V=\begin{pmatrix}V_1&V_2&V_3\end{pmatrix}$ of
\Cref{lem:decomp} and reduce it by two successive Schur complements. Throughout
we use the Schur determinant identity: for a Hermitian block matrix with
invertible block $S$,
\begin{equation}\label{eq:schurdet}
    \det\begin{pmatrix}P&R\\ R^*&S\end{pmatrix}
    =\det S\cdot\det\big(P-RS^{-1}R^*\big),
\end{equation}
the second factor being the Schur complement of $S$ (see \Cref{app:schur}).

\emph{Block structure.} Insert the order decomposition
$L(s)=s^2J_1(s)+sJ_2+J_3(s)$ of \eqref{eq:Ldecomp}, whose terms are supported on
$\cL_1,\cL_2,\cL_3$. The orthogonalities $X_1\perp\cL_2\oplus\cL_3$ and
$X_2\perp\cL_3$ make the frame $V$ bring $A=st\,\T_V+V^*L(s)V$ to
\begin{equation}\label{eq:Ablocks}
    A=\begin{pmatrix}
        st\,\T_{1}+s^2\big(L_1+o(1)\big) & st\,\T_{21}^*+O(s^2) & o(s)\\[2pt]
        st\,\T_{21}+O(s^2) & s\big(I+o(1)\big) & O(s)\\[2pt]
        o(s) & O(s) & L_3+o(1)
    \end{pmatrix},
\end{equation}
with $L_1=V_1^*J_1(0)V_1>0$ and $L_3=V_3^*J_3(0)V_3>0$. Each block's order is
dictated by these orthogonalities: on $X_1$ the map $L(s)$ enters only through
$s^2J_1$; on $X_2$ through $sJ_2$; on $X_3$ through $J_3=O(1)$; and off the
diagonal, besides the vertex-condition couplings $st\,\T_{ij}$, there are the
higher-order tails of $L(s)$. Although $L(s)$ is block-diagonal on
$\cL_1\oplus\cL_2\oplus\cL_3$, the subspaces $X_i$ are not aligned with the
$\cL_j$ (for instance $X_2\subseteq\cL_1\oplus\cL_2$), so $V^*L(s)V$ acquires
off-diagonal entries, of the orders shown.

\emph{First Schur complement (against $X_3$).} For small $s,t>0$ the trailing
block $L_3+o(1)$ is invertible, so \eqref{eq:schurdet} gives $\det
A=\det(L_3+o(1))\cdot\det A_2$, where $A_2$ is the Schur complement of the
$X_3$-block:
\begin{align}\label{eq:D2}
    A_2 &=\begin{pmatrix}
        st\,\T_{1}+s^2(L_1+o(1)) & st\,\T_{21}^*+O(s^2)\\[2pt]
        st\,\T_{21}+O(s^2) & s(I+o(1))
    \end{pmatrix}
    -\binom{o(s)}{O(s)}\big(L_3+o(1)\big)^{-1}\begin{pmatrix}o(s)&O(s)\end{pmatrix}
    \notag\\
    &=s\begin{pmatrix}
        t\,\T_{1}+s\big(L_1+o(1)\big) & t\,\T_{21}^*+O(s)\\[2pt]
        t\,\T_{21}+O(s) & I+o(1)
    \end{pmatrix}=:s\,\widetilde A_2 .
\end{align}

\emph{Second Schur complement.} The trailing block $I+o(1)$ of $\widetilde A_2$
is Hermitian and invertible near the origin; set $I+E_2(s,t):=(I+o(1))^{-1}$,
real-analytic Hermitian with $E_2(0,0)=0$. Applying \eqref{eq:schurdet} to
$\widetilde A_2$, together with $\det A_2=s^{\dim(X_1\oplus X_2)}\det\widetilde
A_2$ and $\det A=\det(L_3+o(1))\det A_2$, gives
\begin{equation}\label{eq:detfull}
  \det A(s,t)=s^{\dim(X_1\oplus X_2)}\,
  \det\big(L_3+o(1)\big)\,\det\big(I+o(1)\big)\,\det D(s,t),
\end{equation}
where $D(s,t)$ is the Schur complement of $I+o(1)$ in $\widetilde A_2$,
\begin{align}
    D(s,t)
    &=t\,\T_{1}+s\big(L_1+o(1)\big)
      -\big(t\,\T_{21}^*+O(s)\big)(I+o(1))^{-1}\big(t\,\T_{21}+O(s)\big)\notag\\
    &=t\,\T_{1}-t^2\,\T_{21}^*\big(I+E_2(s,t)\big)\T_{21}+s\big(L_1+E_1(s,t)\big),
      \label{eq:Dproof}
\end{align}
$E_1(s,t)\to0$; the second line is \eqref{eq:D4}. The prefactors in
\eqref{eq:detfull} are nonzero for small $s,t>0$, so $\det A(s,t)=0$ if and only
if $\det D(s,t)=0$, which is \eqref{eq:detequiv}.
\end{proof}

\subsection{Analyticity and the two rates}\label{subsec:tworates-proof}
We apply \Cref{thm:perturbedRellich} to $F=D$, with $s$ in the role of the branch
variable $\lambda$. This
is licensed because $D$ is real-analytic and Hermitian near the origin (both
inherited from $A(s,t)$ through the analytic, Hermitian Schur-complement
reduction), and, by construction,
\begin{equation}\label{eq:D4analytichyp}
    D(0,0)=0, \qquad \partial_s D(0,0)=L_1>0.
\end{equation}

\begin{lem}[Analytic branches and the two rates]\label{lem:analytic}
    Every solution branch $s=s(t)$ of $\det D(s,t)=0$ approaching the origin is
    real-analytic at $t=0$ with $s(0)=0$; in particular the leading exponent
    $\alpha$ of \eqref{eq:puiseux} is a positive integer. Moreover, for all
    sufficiently small $t>0$ there are exactly
    \begin{enumerate}
    \item 
      $n_1=n_-(\T_1)$ positive branches with $s\sim ct$ ($\alpha=1$), and
    \item
      $n_2=n_+(\Xi)$ positive branches with $s\sim ct^2$ ($\alpha=2$),
    \end{enumerate}
    and no positive branch of any higher order.
\end{lem}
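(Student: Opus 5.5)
The plan is to apply \Cref{thm:perturbedRellich} to $F=D$, with $\lambda=s$. The hypotheses \eqref{eq:D4analytichyp} hold, so we get real-analytic branches $s_1(t),\dots,s_d(t)$ with $d=\dim X_1$ and $s_k(0)=0$. By \eqref{eq:detequiv}, these are exactly the zeros of $\det A$ near the origin for small $s,t>0$. Analyticity forces every nonzero branch to satisfy $s_k\sim c_kt^{p_k}$ with $p_k\in\mathbb N$; this gives the first claim and refines the Puiseux exponent $\alpha$ of \eqref{eq:puiseux}. A branch that vanishes identically does not correspond to any eigenvalue, since we need $s>0$. By part (b) of \Cref{thm:perturbedRellich}, the rates and signs of the branches are governed by the eigenvalues of the one-variable Hermitian slice
\begin{equation*}
  D(0,t)=t\,\T_1-t^2\,\T_{21}^*\big(I+E_2(0,t)\big)\T_{21}.
\end{equation*}
More precisely, we use the Ostrowski relation \eqref{eq:PRostrowski}: the branches match the eigenvalues $\nu_k(t)$ of $D(0,t)$ with opposite sign and the same order. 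So positive branches of order $t^p$ correspond exactly to negative eigenvalues of $D(0,t)$ of order $t^p$.

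Next I analyze $D(0,t)/t=\T_1-t\,\T_{21}^*(I+E_2(0,t))\T_{21}$ by degenerate perturbation theory, using Rellich/Kato. Its eigenvalues are analytic in $t$.
\begin{itemize}
\item The branches emanating from the nonzero eigenvalues of $\T_1$ stay of order $1$ with the same sign. They give $n_-(\T_1)$ negative eigenvalues of $D(0,t)$ of exact order $t$, and $n_+(\T_1)$ positive ones.
\item The $\dim\ker\T_1$ branches emanating from $0$ have first-order slopes equal to the eigenvalues of the compression $-W_2^*\T_{21}^*\T_{21}W_2=-\Xi$. This uses $E_2(0,0)=0$, so the correction is $o(t)$.
\end{itemize}
Rather than quote Kato's selection theorem, I would do this by one more Schur complement with respect to the splitting $X_1=\ker\T_1\oplus\Ran\T_1$. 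This shows that $n_-$ of $D(0,t)$ restricted to the $\ker\T_1$ part equals $n_+(\Xi)$ at order $t^2$, plus possibly higher-order eigenvalues coming from $\ker\Xi=\ker\T_1\cap\ker\T_{21}$.

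The main obstacle is excluding positive branches of order $t^p$ with $p\ge3$. These would come from eigenvalues of $D(0,t)$ tied to $\ker\T_1\cap\ker\T_{21}$. The key observation I would try first is a sign argument. On $\ker\T_1$ the $O(t)$ term vanishes, and the $t^2$ term is $-t^2\T_{21}^*(I+E_2(0,t))\T_{21}$. In general this is not $\le0$, because $E_2(0,t)$ can carry either sign. However, $I+E_2=(I+o(1))^{-1}$ is positive definite near the origin. So the whole correction $-t^2\T_{21}^*(I+E_2)\T_{21}$ is negative semidefinite, with kernel exactly $\ker\T_{21}$, and it vanishes identically there.

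Thus on $Y:=\ker\T_1\cap\ker\T_{21}$ the slice $D(0,t)$ vanishes identically, and by Schur complements the remaining block has no further coupling into $Y$. The terms coupling $Y$ with $\Ran\T_1$ involve only $\T_{21}^*(\dots)\T_{21}$, which annihilates $Y$. Hence those eigenvalues are identically $0$, which gives identically zero branches $s_k\equiv0$ and no positive escaping solutions.

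It remains to count. Exact order $t^2$ negative eigenvalues number $\operatorname{rank}(\T_{21}W_2)=n_+(\Xi)=\dim\ker\T_1-\dim Y$. For this I need to check that the Schur correction from $\Ran\T_1$ is $O(t^3)$ on the $\ker\T_1$ block, so it does not spoil the order-$t^2$ leading term $-t^2\Xi$; it does not, since that coupling is $O(t^2)$ and the inverse block is $O(t^{-1})$. Combining with part (a) of \Cref{thm:perturbedRellich} then yields exactly $n_1$ positive branches with $\alpha=1$, $n_2$ with $\alpha=2$, and none of higher order.
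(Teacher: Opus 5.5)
Your proposal is correct and follows the paper's proof. You apply the implicit Rellich theorem to $D$ with $s$ as the branch variable, then read the counts and orders off the slice $D(0,t)$ split along $\Ran\T_1$, $\ker\T_1\cap(\ker\T_{21})^\perp$ and $\ker\T_1\cap\ker\T_{21}$, using positivity of $I+E_2$. Your explicit Schur complement (coupling $O(t^2)$, inverse block $O(t^{-1})$), together with the observation that $D(0,t)$ annihilates $\ker\T_1\cap\ker\T_{21}$ exactly, supplies the detail behind the paper's ``dominates'' assertions.
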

\begin{proof}
    By \eqref{eq:D4analytichyp} the family $D$ satisfies the hypotheses
    \eqref{eq:PRhyp} of \Cref{thm:perturbedRellich}, so its branches $s=s_k(t)$
    are among the real-analytic branches furnished there, with
    $s_k(0)=0$; through the equivalence \eqref{eq:detequiv} of
    \Cref{prop:reduction} and the criterion \eqref{eq:proofcriterion} the
    escaping branches of $\cH_\e$ are analytic in $t$, and $\alpha$ in
    \eqref{eq:puiseux} is an integer.

    For the counts we examine the unperturbed slice
    \begin{equation}\label{eq:D0t}
        D(0,t)=t\,\T_{1}-t^2\,\T_{21}^*\big(I+E_2(0,t)\big)\T_{21},\qquad
        E_2(0,0)=0,
    \end{equation}
    whose super-linear part is sandwiched between $\T_{21}$ and $\T_{21}^*$ (see
    \eqref{eq:Dproof}). For small $t>0$ its eigenvalues fall into three groups:
    \begin{itemize}
        \item on $(\ker\T_{1})^\perp$ the term $t\,\T_{1}$ dominates: order $t$,
        contributing $n_-(\T_{1})=n_1$ negative eigenvalues;
        \item on $\ker\T_{1} \cap (\ker\T_{21})^\perp$ the linear term vanishes and
        $-t^2\,\T_{21}^*(I+E_2(0,t))\T_{21}\preceq0$ dominates: order $t^2$, contributing
        $\dim\ker\T_{1}-\dim(\ker\T_{1}\cap\ker\T_{21})=n_2$ negative eigenvalues;
      \item on $\ker\T_{1}\cap\ker\T_{21}$ the entire matrix $D(0,t)$
        vanishes identically, so the eigenvalues are identically $0$.
    \end{itemize}
    Hence $n_-\big(D(0,t)\big)=n_1+n_2$ for small $t>0$. By
    \Cref{thm:perturbedRellich}(a) there are exactly $n_1+n_2$ positive branches,
    and by (b) their orders are those of the negative eigenvalues of $D(0,t)$:
    $n_1$ of order $t$ and $n_2$ of order $t^2$, with none higher (the remaining
    eigenvalues of $D(0,t)$ vanish identically).
  \end{proof}

\begin{rmk}[What forces the two rates]\label{rmk:loadbearing}
It is tempting to reason ``$D(0,t)$ is quadratic in $t$, hence two rates,'' but
neither half holds up. $D(0,t)$ is not quadratic: by \eqref{eq:D0t} it
will have $t^3$ and higher terms as soon as $\T_{22}:=V_2^*\T
V_2\neq0$.  Furthermore, a genuine quadratic need not confine the rates either: the
indefinite $\left(\begin{smallmatrix} t & t^2\\ t^2 & 0\end{smallmatrix}\right)$
already has an eigenvalue of order $-t^3$.  What the proof of
\Cref{lem:analytic} uses instead is the \emph{structure} of the super-linear
coefficient: on $\ker\T_1$ it is $-t^2\,\T_{21}^*(I+E_2)\T_{21}$,
positive-semidefinite with kernel $\ker\T_{21}$ and rank constant in $t$ (since
$I+E_2>0$). It therefore vanishes to all orders on $\ker\T_{21}$ and is bounded
below outside of $\ker\T_{21}$, so no eigenvalue can drift to $0$ to open a $t^3$ rate. The
counterexample above produces a $t^3$ rate precisely because its indefinite
coefficient drops rank.
\end{rmk}

\subsection{Leading coefficients of the escaping branches}\label{subsec:constants}
Part (a) of the implicit Rellich theorem fixed the two \emph{rates}; its part
(c) also delivers the constants multiplying them: directly for the rate $t$, and
after one Schur reduction back to the same normal form for the rate $t^2$.

\begin{lem}[Leading coefficients]\label{lem:constants}
    With $W_2$ an orthonormal frame for $\ker\T_{1}$ and $L_2,\Xi$ the
    compressions of $L_1$ and $\T_{21}^*\T_{21}$ to $\ker\T_1$ of
    \eqref{eq:kerblocks}, the positive branches of $\det D(s,t)=0$ (equivalently,
    the negative eigenvalues of $\cH_\e$ escaping to $-\infty$) are, to leading
    order:
    \begin{enumerate}[(i)]
        \item \emph{(Rate $t$.)} The $n_1=n_-(\T_{1})$ branches $s=ct+o(t)$ have
        coefficients $c>0$ equal to the positive eigenvalues of the Hermitian
        matrix $-L_1^{-1/2}\T_{1}L_1^{-1/2}$; correspondingly
        \begin{equation}\label{eq:lamS}
            \l(\e)=-c\,\e^{-1}\big(1+o(1)\big).
        \end{equation}
        \item \emph{(Rate $t^2$.)} The $n_2=n_+(\Xi)$ branches $s=ct^2+o(t^2)$
        have coefficients $c>0$ equal to the positive eigenvalues of the
        Hermitian matrix $L_2^{-1/2}\,\Xi\,L_2^{-1/2}$; correspondingly
        \begin{equation}\label{eq:lamC}
            \l(\e)=-c^{2/3}\,\e^{-2/3}\big(1+o(1)\big).
        \end{equation}
    \end{enumerate}
    The coefficient lists are independent of the choice of frame $W_2$.
\end{lem}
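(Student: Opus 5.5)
Part (i) comes directly from \Cref{thm:perturbedRellich}(c) applied to $F=D$ with $s$ in the role of $\lambda$. By \eqref{eq:D4}, $\partial_s D(0,0)=L_1$ and $\partial_t D(0,0)=\T_1$, because the $t^2$ term and the correction $sE_1(s,t)$ (with $E_1(0,0)=0$) contribute nothing at first order. Hence the slopes $s_k'(0)$ are exactly the eigenvalues of $-L_1^{-1/2}\T_1L_1^{-1/2}$. By Ostrowski/Sylvester inertia this matrix has $n_-(\T_1)=n_1$ positive eigenvalues. By \Cref{lem:analytic} these positive slopes belong precisely to the $n_1$ positive branches of order $t$. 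The branches with zero slope are those of order $\ge t^2$, and they correspond to $\ker\T_1$. To translate back, use $\e=st$ and $\l=-t^{-2}$. With $s=ct(1+O(t))$ we get $\e=ct^2(1+O(t))$, so $t=(\e/c)^{1/2}(1+O(\e^{1/2}))$ and $\l=-c\,\e^{-1}(1+O(\e^{1/2}))$, which is \eqref{eq:lamS}.

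For part (ii) I reduce to the same normal form after rescaling. Split $X_1=(\ker\T_1)^\perp\oplus\ker\T_1$ with frames $W_1,W_2$, and substitute $s=t^2\sigma$. In the block form of $D(t^2\sigma,t)$, the $(\ker\T_1)^\perp$ block is $t\,W_1^*\T_1W_1+O(t^2)$, which is invertible of order $t$. The off-diagonal block is $O(t^2)$: there is no $t\T_1$ cross term because $\T_1W_2=0$, and the remaining terms are $t^2\T_{21}^*\T_{21}$ plus $s L_1=O(t^2)$. The $\ker\T_1$ block is
\begin{equation*}
t^2\big(-W_2^*\T_{21}^*(I+E_2)\T_{21}W_2+\sigma(L_2+W_2^*E_1W_2)\big).
\end{equation*}
Taking the Schur complement against the first block subtracts $O(t^2)\,O(t^{-1})\,O(t^2)=O(t^3)$. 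Dividing by $t^2$ yields an analytic Hermitian family
\begin{equation*}
G(\sigma,t)=\sigma L_2-\Xi+O(t)+O(t^2\sigma),
\end{equation*}
and $\det D=0$ with $s$ of order $t^2$ is equivalent to $\det G=0$.

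The main obstacle is that $G(0,0)=-\Xi\neq0$, so \Cref{thm:perturbedRellich} does not apply at the origin. I avoid this by working at each root $\sigma_0$ of $\det(\sigma L_2-\Xi)=0$ and considering the finite-dimensional problem there. The roots are the eigenvalues $c$ of $L_2^{-1/2}\Xi L_2^{-1/2}$; there are $n_+(\Xi)$ positive ones, the rest being zero. I then apply the implicit function/Rellich argument to $G(\sigma_0+\mu,t)$: compress to $\ker(\sigma_0L_2-\Xi)$ by one further Schur complement, and note that $\partial_\mu$ of the compression is $L_2$ restricted there, which is positive. This gives, with multiplicity, branches $\sigma(t)\to c$. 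Equivalently, one can use continuity of the roots of $\det G(\cdot,t)$: since $\partial_\sigma G>0$, the polynomial-like determinant has exactly $\dim\ker\T_1$ roots near the compact set of eigenvalues, by Hurwitz's theorem. Matching with the count $n_2$ from \Cref{lem:analytic} identifies the order-$t^2$ branches with the positive $c$'s. The branches with $c=0$ are of order $>t^2$, and by \Cref{lem:analytic} they are not positive.

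Finally, translate back. With $s=ct^2(1+O(t))$ we get $\e=ct^3(1+O(t))$, so $t=(\e/c)^{1/3}(1+O(\e^{1/3}))$ and $\l=-t^{-2}=-c^{2/3}\e^{-2/3}(1+O(\e^{1/3}))$, which is \eqref{eq:lamC}. For frame independence, replacing $W_2$ by $W_2U$ with $U$ unitary conjugates $L_2$ and $\Xi$ to $U^*L_2U$ and $U^*\Xi U$. This leaves the roots of $\det(\sigma L_2-\Xi)$ unchanged, and the same holds for $L_1$, $\T_1$ under a change of the frame $V_1$.
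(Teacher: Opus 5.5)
Your proof is correct. Part (i) is the paper's argument, but in part (ii) you take a genuinely different route.

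\textbf{The paper's route.} It substitutes $s=\rho t$ rather than $s=t^2\sigma$. Then $D(\rho t,t)=t\,G(\rho,t)$ with $G(\rho,t)=\T_1+\rho L_1-t\,\T_{21}^*(I+E_2)\T_{21}+\rho E_1$, whose $\Ran\T_1$ block is invertible already at order one. The Schur complement of that block is $H(\rho,t)=\rho L_2-t\,\Xi+o(\rho)+o(t)$, which \emph{does} vanish at the origin, with $\partial_\rho H(0,0)=L_2>0$. A second application of \Cref{thm:perturbedRellich}(c) then gives all slopes $\rho_k'(0)$ at once as the eigenvalues of $L_2^{-1/2}\Xi L_2^{-1/2}$. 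Zero slopes are included, and multiplicities and analyticity of $\rho_k$ come for free. No root-by-root localization and no matching against \Cref{lem:analytic} is needed.

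\textbf{Your route.} Blowing up at the natural scale $s=t^2\sigma$ makes the coefficients appear as genuine limits $\sigma(t)\to c$ of roots of a family continuous at $t=0$, with $G(\sigma,0)=\sigma L_2-\Xi$.

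The price is that $G(0,0)=-\Xi\neq0$, so you must localize at each root separately. You also lean on \Cref{lem:analytic} for two things: the analytic form $s=ct^2(1+O(t))$ used when translating back to $\lambda(\e)$, and excluding positive branches from the cluster at $\sigma_0=0$.

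In exchange, your Hurwitz variant is more elementary. It needs no second invocation of Dencker's factorization. It only uses that $\det D(t^2\sigma,t)$ equals $\det G(\sigma,t)$ times a factor nonvanishing for small $t>0$, which makes the root multiplicities of $\det D(\cdot,t)$ and $\det G(\cdot,t)$ agree. The relevant input for Hurwitz is $\det G(\sigma,0)=\det L_2\prod_j(\sigma-c_j)$, not the positivity of $\partial_\sigma G$.

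One harmless slip: the $E_1$ contribution is $\sigma W_2^*E_1(t^2\sigma,t)W_2=O(\sigma t)$, not $O(t^2\sigma)$. It is still absorbed into $O(t)$ for bounded $\sigma$.
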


\begin{proof}
    By \Cref{prop:reduction} the escaping branches solve $\det D(s,t)=0$, and by
    \Cref{lem:analytic} each is analytic with $s\sim ct$ or $s\sim ct^2$, $c>0$.
    Recall from \eqref{eq:D4} that
    \begin{equation}\label{eq:Drecall}
        D(s,t)=t\,\T_{1}-t^2\,\T_{21}^*\big(I+E_2(s,t)\big)\T_{21}
        +s\big(L_1+E_1(s,t)\big),\qquad E_1,E_2\to0 .
    \end{equation}
    Both coefficients are read off from \Cref{thm:perturbedRellich}(c), applied
    first to $D$ and then to a reduction of it.
    
    \emph{(i) Rate $t$.} Apply \Cref{thm:perturbedRellich} to $F=D$, with $s$ in
    the role of the branch variable $\lambda$; by \eqref{eq:Drecall} its
    derivatives at the origin are $\partial_s D(0,0)=L_1>0$ and
    $\partial_t D(0,0)=\T_1$. By part (c) the slopes $s_k'(0)$ are, with
    multiplicity, the eigenvalues of $-L_1^{-1/2}\T_1L_1^{-1/2}$. The linear
    branches $s\sim ct$ are those with $c=s_k'(0)\neq0$.
    There are (using Sylvester's law of inertia, \Cref{app:schur})
    \begin{equation}
      \label{eq:n1_calc}
      n_1 := n_+\left(-L_1^{-1/2}\T_1L_1^{-1/2}\right)
      = n_-(\T_1)
    \end{equation}
    positive linear branches.
    Finally $s=ct$ gives $\k^2\e=c$, so $\l=-\k^2=-c\,\e^{-1}$, which is
    \eqref{eq:lamS}.

    \emph{(ii) Rate $t^2$.} The rate-$t^2$ branches are those with zero slope,
    $s_k'(0)=0$. To resolve them, substitute $s=\rho t$ into \eqref{eq:Drecall}:
    \begin{equation}\label{eq:Grhot}
        D(\rho t,t)=t\,G(\rho,t),\qquad
        G(\rho,t)=\T_1+\rho L_1-t\,\T_{21}^*(I+E_2)\T_{21}+\rho\,E_1 ,
    \end{equation}
    and seek the solutions $\rho(t)=O(t)$ of $\det G(\rho,t)=0$ (those
    with $s=\rho t=O(t^2)$). We cannot apply
    \Cref{thm:perturbedRellich} to $G$ directly, since $G(0,0)=\T_1$
    need not vanish.  We first pass to another Schur complement.  In
    the frame $W=\begin{pmatrix}W_1&W_2\end{pmatrix}$, where $W_1$
    spans $\Ran\T_1$ and $W_2$ spans $\ker\T_1$, and using
    $\T_1W_2=0$,
    \begin{equation}\label{eq:Gblock}
        W^*G(\rho,t)W=\begin{pmatrix}
            W_1^*\T_1W_1+O(\rho)+O(t) & O(\rho)+O(t)\\[2pt]
            O(\rho)+O(t) & \rho L_2-t\,\Xi+o(\rho)+o(t)
        \end{pmatrix},
    \end{equation}
    with $L_2=W_2^*L_1W_2$ and $\Xi=W_2^*\T_{21}^*\T_{21}W_2$ as in
    \eqref{eq:kerblocks}. The $(1,1)$ block is invertible near the origin, and the
    Schur complement of this leading block (the $(1,1)$-block in
    \Cref{app:schur}) is
    \begin{equation}\label{eq:Hred}
        H(\rho,t)=\rho L_2-t\,\Xi+o(\rho)+o(t)
    \end{equation}
    (the off-diagonal correction being $O((\rho+t)^2)=o(\rho)+o(t)$). Now $H$ is
    real-analytic and Hermitian with $H(0,0)=0$ and $\partial_\rho H(0,0)=L_2>0$,
    so \Cref{thm:perturbedRellich} applies, with $\rho$ in the role of $\lambda$
    and derivatives $\partial_\rho H=L_2$, $\partial_t H=-\Xi$ at the
    origin. By part (c) the branches $\rho=\rho_k(t)$ have slopes $\rho_k'(0)$
    equal, with multiplicity, to the eigenvalues of
    \begin{equation}\label{eq:rho2slopes}
        -L_2^{-1/2}(-\Xi)L_2^{-1/2}=L_2^{-1/2}\,\Xi\,L_2^{-1/2}\ \ge0 .
    \end{equation}
    Hence $s=\rho t\sim\rho_k'(0)\,t^2$.  The positive branches number
    $n_+(\Xi)=n_2$, while $\ker\Xi=\ker\T_1\cap\ker\T_{21}$ gives the
    identically-zero branches. Finally $s=ct^2$ gives $\k^3\e=c$, so
    $\l=-\k^2=-c^{2/3}\e^{-2/3}$, which is \eqref{eq:lamC}.
\end{proof}

\subsection{Proof of the main result}\label{subsec:mainproof}
We now assemble the pieces. \Cref{crl:Lcriterion} identifies the negative eigenvalues
of $\cH_\e$ escaping to $-\infty$ with the positive branches $s=s(t)\to0^+$ of
$\det A(s,t)=0$, equivalently, by \Cref{prop:reduction}, of $\det D(s,t)=0$. By
\Cref{subsec:tworates-proof} these branches are analytic in $t$ and occur at
exactly the two rates $s\sim t$ and $s\sim t^2$, with $n_1=n_-(\T_1)$ of the
first kind and $n_2=n_+(\Xi)$ of the second; through $\l=-t^{-2}$ and $\e=st$
these are the eigenvalue rates $\l\sim-\e^{-1}$ and $\l\sim-\e^{-2/3}$. This
proves the two families of \Cref{thm:main} and the counts \eqref{eq:n2}, the
identity $n_+(\Xi)=\dim\ker\T_1-\dim(\ker\T_1\cap\ker\T_{21})$ holding because
$\ker\Xi=\ker\T_1\cap\ker\T_{21}$. The leading coefficients of
\Cref{thm:main} are those computed in \Cref{lem:constants}: for a branch
$s\sim ct$ one has $a_i=c$ by \eqref{eq:lamS}, and for $s\sim ct^2$ one has
$a_i=c^{2/3}$ by \eqref{eq:lamC}.

It remains to record the analytic structure of each branch in $\e$. By
\Cref{lem:analytic} an escaping branch is analytic, $s=s(t)$ with $s(0)=0$, and
either $s(t)=ct\big(1+O(t)\big)$ (first family) or $s(t)=ct^2\big(1+O(t)\big)$
(second family), $c>0$. Since $\e=t\,s(t)$ and $\l=-t^{-2}$, for the first family
$\e(t)=ct^2\big(1+O(t)\big)$ has a double zero at $t=0$, so
$\e^{1/2}=t\sqrt{c}\,\big(1+O(t)\big)$ is analytic in $t$ with nonvanishing
derivative there; inverting, $t$ is analytic in $\e^{1/2}$, and hence
$\l=-t^{-2}$ is meromorphic in $\e^{1/2}$ with a double pole,
\begin{equation}\label{eq:merom1}
    \l(\e)=-c\,\e^{-1}\big(1+O(\e^{1/2})\big).
\end{equation}
For the second family $\e(t)=ct^3\big(1+O(t)\big)$ has a triple zero, so $t$ is
analytic in $\e^{1/3}$ and $\l=-t^{-2}$ is meromorphic in $\e^{1/3}$ with a
double pole,
\begin{equation}\label{eq:merom2}
    \l(\e)=-c^{2/3}\,\e^{-2/3}\big(1+O(\e^{1/3})\big).
\end{equation}
This is the meromorphy asserted in \Cref{thm:main}, and refines
\eqref{eq:typeS}, \eqref{eq:typeC}. \qed

\appendix
\crefalias{section}{appendix}
\section{The Dirichlet-to-Neumann map: background and proofs}\label{sec:DtNbackground}
Here we prove \Cref{prop:disjointDtN} and \Cref{thm:DtNproperties} using the
boundary-triplet formalism, following \cite[Ch.~14]{Schmudgen_unboundedSAO}.

\subsection{Boundary triplets and the Weyl function}
\begin{defn}\label{def:boundarytriple}
    Let $T$ be a densely defined symmetric operator on a Hilbert space $\cH$. A
    \emph{boundary triplet} for $T^*$ is a triple $(\cK,\G_0,\G_1)$, with $\cK$
    a Hilbert space and $\G_0,\G_1:\cD(T^*)\to\cK$ linear, such that
    \begin{enumerate}[(a)]
        \item $\langle T^*x,y\rangle-\langle x,T^*y\rangle
        =\langle\G_1x,\G_0y\rangle-\langle\G_0x,\G_1y\rangle$ for all
        $x,y\in\cD(T^*)$, and
        \item $x\mapsto(\G_0x,\G_1x)$ maps $\cD(T^*)$ onto $\cK\oplus\cK$.
    \end{enumerate}
\end{defn}

Take $T=-\Delta$ with the minimal domain $\cD(T)=H^2_0(G_\e)$, so that
$\cD(T^*)=H^2(G_\e)$, and let $\G_0f=F$, $\G_1f=F'$ be the Dirichlet and Neumann
traces of \eqref{eq:DNtrace}. Green's identity (integration by parts on each
edge, using the inward sign convention in $F'$) is exactly
\Cref{def:boundarytriple}(a), and surjectivity (b) is elementary; thus
$(\C^m,\G_0,\G_1)$ is a boundary triplet for $T^*$. Let $T_0$ be the
self-adjoint extension with $\cD(T_0)=\ker\G_0$ (functions vanishing at every
edge end). For $\l\in\rho(T_0)$ the \emph{gamma field}
\begin{equation}\label{eq:gammafield}
    \g(\l)=\big(\G_0|_{\ker(T^*-\l)}\big)^{-1}
\end{equation}
maps Dirichlet data to the corresponding eigenfunction, and the \emph{Weyl
function} (Dirichlet-to-Neumann map)
\begin{equation}\label{eq:weyl}
    M_\l=\G_1\g(\l)
\end{equation}
returns its Neumann data, so that $F'=M_\l F$ for solutions of $-\Delta f=\l f$.
Both are holomorphic on $\rho(T_0)$, and \cite[Prop.~14.15]{Schmudgen_unboundedSAO}
\begin{equation}\label{eq:DtNderivative}
    M_\l^*=M_{\bar\l},\qquad \frac{d}{d\l}M_\l=\g(\bar\l)^*\g(\l).
\end{equation}

\subsection{Per-edge computation (proof of \Cref{prop:disjointDtN})}
The minimal operator $T=-\Delta$ on $\cD(T)=H^2_0(G_\e)$ is the direct sum
$\bigoplus_e T_e$ of the per-edge minimal operators, its domain imposing no
coupling between edges; hence $T^*$, the gamma field, and the Weyl function
$M_\l$ all decompose as direct sums over the edges, and
$M_\l=\bigoplus_e\L_\k^{(e)}$ is assembled edge by edge. Fix $\l=-\k^2$ with
$\k\in\C_R$, and solve $-u''=\l u$, i.e.\ $u''=\k^2u$, on each edge.

For a \emph{semi-infinite lead} $[0,\infty)$ the $L^2$ solution is
$u=a\,e^{-\k x}$, so $\g(-\k^2)a=a\,e^{-\k x}$ and the DtN value is
\begin{equation}\label{eq:appDtNlead}
    u'(0)/u(0)=-\k ,
\end{equation}
which is $\L_\k^\infty$ of \eqref{eq:DtNlead}. For a \emph{finite edge}
$[0,\ell]$ the solution with prescribed endpoint values $u(0)=a$, $u(\ell)=b$ is
\begin{equation}
    \g(-\k^2)\begin{pmatrix}a\\b\end{pmatrix}
    =\frac{\sinh(\k(\ell-x))}{\sinh(\k\ell)}\,a+\frac{\sinh(\k x)}{\sinh(\k\ell)}\,b ,
\end{equation}
and the inward derivatives $\big(u'(0),-u'(\ell)\big)$ are given, as a map of
$(a,b)$, by
\begin{equation}\label{eq:appDtNloop}
    \frac{\k}{\sinh(\k\ell)}\begin{pmatrix}
        -\cosh(\k\ell)&1\\ 1&-\cosh(\k\ell)\end{pmatrix},
\end{equation}
which is $\L_\k(\ell)$ of \eqref{eq:DtNloop}. Taking the direct sum over the
$m_0$ loops and $m_1$ leads yields \eqref{eq:disjointDtN}.

\subsection{Proof of \Cref{thm:DtNproperties}}
\emph{(a)} Since $\sigma(T_0)\subseteq[0,\infty)$, we have
$\rho(T_0)\supseteq\C\setminus[0,\infty)$, and with the principal branch of the
square root $\l=-\k^2\in\rho(T_0)$ for every $\k\in\C_R$. Hence $\g$, and so
$M_\l$, are holomorphic there, and $M_\l^*=M_{\bar\l}$ by
\eqref{eq:DtNderivative} makes $\tL(\k)=M_{-\k^2}$ self-adjoint for real $\l$.
Finally $dM_\l/d\l=\g(\bar\l)^*\g(\l)\ge0$ is \eqref{eq:Mmonotone}.

\emph{(b)} By the Krein-type resolvent formula for boundary triplets
\cite[Prop.~14.17]{Schmudgen_unboundedSAO}, for $\k_0\in\C_R$
\begin{equation}\label{eq:dimkers}
    \dim\ker(\cH_\e+\k_0^2)=\dim\ker\big(\T_V-V^*M_{-\k_0^2}V\big),
\end{equation}
which is (i)$\Leftrightarrow$(iii). For (ii)$\Leftrightarrow$(iii), set
$B(\k)=\T_V-V^*M_{-\k^2}V$; it is Hermitian for $\k>0$, so by Rellich's theorem
\cite[Thm.~II.6.1]{Kato_perturbation} its eigenvalues $\mu_j(\k)$ and eigenvectors $u_j(\k)$
are real-analytic. If $\mu_j(\k_0)=0$ then, by \eqref{eq:DtNderivative},
$B'(\k)=2\k\,V^*(dM_\l/d\l)V\ge0$ is positive definite along the relevant
directions, so
\begin{equation}
    \mu_j'(\k_0)=\langle u_j(\k_0),B'(\k_0)u_j(\k_0)\rangle>0 ;
\end{equation}
hence each vanishing branch has a simple zero at $\k_0$, and the order of
$\k_0$ as a root of $\det B(\k)=\prod_j\mu_j(\k)$ equals the number
$\dim\ker B(\k_0)$ of vanishing branches. This is (ii)$\Leftrightarrow$(iii).

\emph{(c)} The eigenvalue-counting identity
$N\big(\cH_\e;(-\infty,\l)\big)=n_-\big(\T_V-V^*M_\l V\big)$ follows from the
symplectic (Duistermaat index) count of \cite[\S5,\;Eq.~(7.13)]{BerCoxLatSuk_jst26}, applied
to the Lagrangian planes of the Dirichlet extension $T_0$, of $\cH_\e$, and of
the graph of $M_\l$; the monotonicity \eqref{eq:Mmonotone} guarantees the index
is the negative inertia of $\T_V-V^*M_\l V$. \qed

\section{Congruence and Schur complements}\label{app:schur}
We record the two linear-algebra facts used in \Cref{sec:proof}. For
Hermitian $H$, we write $n_+(H),n_-(H),n_0(H)$ for the numbers of
positive, negative, and zero eigenvalues. \emph{Sylvester's law of
  inertia} \cite[Thm.~4.5.8]{HornJohnson}: congruence by an invertible
matrix, $H\mapsto S^*HS$, preserves the triple $(n_+,n_-,n_0)$. Two
consequences are used in \Cref{sec:proof}. First, the compression
$U^*HU$ of $H$ by an orthonormal frame $U$ of a subspace has
eigenvalues, hence inertia, independent of the frame (two orthonormal
frames of a subspace differ by a unitary); this is what makes the
counts $n_1=n_-(\T_1)$ and $n_2=n_+(\Xi)$ well defined. Second, the
sign counts are unchanged under the invertible congruences appearing
there, for instance
$n_+\big(-L_1^{-1/2}\T_1L_1^{-1/2}\big)=n_+(-\T_1)=n_-(\T_1)$ and
$n_+\big(L_2^{-1/2}\Xi L_2^{-1/2}\big)=n_+(\Xi)$.

\begin{lem}[Schur determinant identity]\label{lem:schurdet}
    For a Hermitian block matrix with invertible trailing block $S$,
    \begin{equation}\label{eq:appschurdet}
        \det\begin{pmatrix}P&R\\ R^*&S\end{pmatrix}
        =\det S\cdot\det\big(P-RS^{-1}R^*\big).
    \end{equation}
\end{lem}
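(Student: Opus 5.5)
The plan is to reduce the identity to a block factorization of the Hermitian matrix and then take determinants. Write
\[
N=\begin{pmatrix}P&R\\ R^*&S\end{pmatrix},
\]
with $S$ invertible. Multiply $N$ on the left by the block lower-unitriangular matrix
\[
T=\begin{pmatrix}I&-RS^{-1}\\ 0&I\end{pmatrix}
\]
and on the right by $T^*$. A direct block multiplication then gives the congruence
\[
TNT^*=\begin{pmatrix}P-RS^{-1}R^*&0\\ 0&S\end{pmatrix}.
\]
This uses $S^*=S$ (Hermitian), so that $(RS^{-1})^*=S^{-1}R^*$ and the off-diagonal blocks cancel. The computation is short: the $(1,2)$ block of $TN$ is $R-RS^{-1}S=0$, and multiplying on the right by $T^*$ then clears the $(2,1)$ block in the same way.

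Next, take determinants of both sides. Since $T$ and $T^*$ are block triangular with identity diagonal blocks, $\det T=\det T^*=1$. The determinant of a block-diagonal matrix is the product of the determinants of its blocks, so
\[
\det N=\det S\cdot\det\big(P-RS^{-1}R^*\big),
\]
which is \eqref{eq:appschurdet}.

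There is essentially no obstacle here. The only point to watch is that the factorization needs $S$ invertible, which is assumed, and that the Hermitian structure is what makes the right factor the adjoint $T^*$.

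As a byproduct, the congruence, combined with Sylvester's law of inertia, gives inertia additivity, $n_\pm(N)=n_\pm(S)+n_\pm(P-RS^{-1}R^*)$. This is worth recording because it also justifies that the Schur complements in \Cref{prop:reduction} and \Cref{lem:constants} preserve both the Hermitian structure and real-analyticity. Analyticity follows since $S^{-1}$ is analytic wherever $S$ is invertible.
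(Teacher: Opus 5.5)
Your proof is correct and is the paper's argument: the congruence $TNT^*=\mathrm{diag}\big(P-RS^{-1}R^*,\,S\big)$ is the paper's factorization $N=T^{-1}\,\mathrm{diag}\big(P-RS^{-1}R^*,\,S\big)\,(T^*)^{-1}$ read in the other direction, with unit-determinant triangular outer factors. Two small remarks: $T$ is block \emph{upper}-unitriangular rather than lower, and the Hermitian hypothesis is needed only to make this a congruence (for your inertia byproduct), not for the determinant identity itself.
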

\begin{proof}
    The block factorization
    \begin{equation}
        \begin{pmatrix}P&R\\ R^*&S\end{pmatrix}
        =\begin{pmatrix}I&RS^{-1}\\ 0&I\end{pmatrix}
         \begin{pmatrix}P-RS^{-1}R^*&0\\ 0&S\end{pmatrix}
         \begin{pmatrix}I&0\\ S^{-1}R^*&I\end{pmatrix}
    \end{equation}
    has unit-determinant outer factors, so the determinant is that of the middle
    factor, namely $\det(P-RS^{-1}R^*)\det S$.
\end{proof}

This is \eqref{eq:schurdet}; the second factor is the Schur complement of the
trailing block $S$, used twice in the reduction of \Cref{prop:reduction}. The
complement of the \emph{leading} block has the symmetric analogue: for invertible
$P$,
\begin{equation}\label{eq:appschurdet2}
    \det\begin{pmatrix}P&R\\ R^*&S\end{pmatrix}
    =\det P\cdot\det\big(S-R^*P^{-1}R\big),
\end{equation}
and the factor $S-R^*P^{-1}R$ is the form used for the $(1,1)$-block reduction in
the rate-$t^2$ analysis of \Cref{lem:constants}.

\bibliographystyle{siam}
\bibliography{bk_bibl,additional,references}

\def\cprime{$'$} \def\cprime{$'$} \def\cprime{$'$} \def\cprime{$'$}
  \def\cprime{$'$} \def\cprime{$'$} \def\cprime{$'$}
  \def\polhk#1{\setbox0=\hbox{#1}{\ooalign{\hidewidth
  \lower1.5ex\hbox{`}\hidewidth\crcr\unhbox0}}} \def\cprime{$'$}
  \def\cprime{$'$}
\begin{thebibliography}{10}

\bibitem{Alb_anp12}
{\sc S.~Albeverio and S.~Kusuoka}, {\em Diffusion processes in thin tubes and
  their limits on graphs}, The Annals of Probability, 40 (2012).

\bibitem{BanLev_ahp17}
{\sc R.~Band and G.~L\'evy}, {\em Quantum graphs which optimize the spectral
  gap}, Ann. Henri Poincar\'e, 18 (2017), pp.~3269--3323.

\bibitem{Baumgartel_PertTheory}
{\sc H.~Baumg\"artel}, {\em Analytic perturbation theory for matrices and
  operators}, vol.~15 of Operator Theory: Advances and Applications,
  Birkh\"auser Verlag, Basel, 1985.

\bibitem{BerBorKin_aamp23}
{\sc G.~Berkolaiko, D.~I. Borisov, and M.~King}, {\em Exotic eigenvalues and
  analytic resolvent for a graph with a shrinking edge}, Anal. Math. Phys., 13
  (2023), p.~90.

\bibitem{BerCdV_jmaa24}
{\sc G.~Berkolaiko and Y.~Colin~de Verdi\`ere}, {\em Exotic eigenvalues of
  shrinking metric graphs}, J. Math. Anal. Appl., 534 (2024), pp.~Paper No.
  128040, 9.

\bibitem{BerCoxLatSuk_jst26}
{\sc G.~Berkolaiko, G.~Cox, Y.~Latushkin, and S.~Sukhtaiev}, {\em The
  {D}uistermaat index and eigenvalue interlacing for self-adjoint extensions of
  a symmetric operator}, J. Spectr. Theory, 16 (2026), pp.~1--49.

\bibitem{BerKuc_incol12}
{\sc G.~Berkolaiko and P.~Kuchment}, {\em Dependence of the spectrum of a
  quantum graph on vertex conditions and edge lengths}, in Spectral Geometry,
  vol.~84 of Proceedings of Symposia in Pure Mathematics, American Math. Soc.,
  2012.
\newblock preprint {\tt arXiv:1008.0369}.

\bibitem{BerKuc_graphs}
\leavevmode\vrule height 2pt depth -1.6pt width 23pt, {\em Introduction to
  Quantum Graphs}, vol.~186 of Mathematical Surveys and Monographs, AMS, 2013.

\bibitem{BerLatSuk_am19}
{\sc G.~Berkolaiko, Y.~Latushkin, and S.~Sukhtaiev}, {\em Limits of quantum
  graph operators with shrinking edges}, Adv. Math., 352 (2019), pp.~632--669.

\bibitem{BerLiu_jmaa17}
{\sc G.~Berkolaiko and W.~Liu}, {\em Simplicity of eigenvalues and
  non-vanishing of eigenfunctions of a quantum graph}, J. Math. Anal. Appl.,
  445 (2017), pp.~803--818.
\newblock preprint {\tt arXiv:1601.06225}.

\bibitem{BolEnd_ahp09}
{\sc J.~Bolte and S.~Endres}, {\em The trace formula for quantum graphs with
  general self adjoint boundary conditions}, Ann. Henri Poincar\'e, 10 (2009),
  pp.~189--223.

\bibitem{Bor_math21}
{\sc D.~I. Borisov}, {\em Spectra of elliptic operators on quantum graphs with
  small edges}, Mathematics, 9 (2021), p.~1874.

\bibitem{Bor_am22}
{\sc D.~I. Borisov}, {\em Analyticity of resolvents of elliptic operators on
  quantum graphs with small edges}, Adv. Math., 397 (2022), pp.~Paper No.
  108125, 48.

\bibitem{BK86}
{\sc E.~Brieskorn and H.~Knörrer}, {\em Plane Algebraic Curves}, Springer
  Basel, 1986.

\bibitem{Cac_s19}
{\sc C.~Cacciapuoti}, {\em Scale invariant effective {H}amiltonian for a graph
  with a small compact core}, Symmetry, 11 (2019), pp.~359, 29.

\bibitem{Car_nhm11}
{\sc R.~Carlson}, {\em Spectral theory for nonconservative transmission line
  networks}, Netw. Heterog. Media, 6 (2011), pp.~257--277.

\bibitem{CheExnTur_anp10}
{\sc T.~Cheon, P.~Exner, and O.~Turek}, {\em Approximation of a general
  singular vertex coupling in quantum graphs}, Ann. Physics, 325 (2010),
  pp.~548--578.

\bibitem{CdV_ahp15}
{\sc Y.~Colin~de Verdi\`{e}re}, {\em Semi-classical measures on quantum graphs
  and the {G}au\ss{} map of the determinant manifold}, Annales Henri
  Poincar\'{e}, 16 (2015), pp.~347--364.
\newblock also {\tt arXiv:1311.5449}.

\bibitem{Den26}
{\sc N.~Dencker}, {\em Symmetric preparation of systems}, 2026.
\newblock preprint {\tt arXiv:2601.12458}.

\bibitem{DoKucOng_ems17}
{\sc N.~T. Do, P.~Kuchment, and B.~Ong}, {\em On resonant spectral gaps in
  quantum graphs}, in Functional analysis and operator theory for quantum
  physics, EMS Ser. Congr. Rep., Eur. Math. Soc., Z\"{u}rich, 2017,
  pp.~213--222.

\bibitem{HornJohnson}
{\sc R.~A. Horn and C.~R. Johnson}, {\em Matrix analysis}, Cambridge University
  Press, Cambridge, second~ed., 2013.

\bibitem{Kato_perturbation}
{\sc T.~Kato}, {\em Perturbation theory for linear operators}, Classics in
  Mathematics, Springer-Verlag, Berlin, 1995.
\newblock Reprint of the 1980 edition.

\bibitem{KosSch_jpa99}
{\sc V.~Kostrykin and R.~Schrader}, {\em Kirchhoff's rule for quantum wires},
  J. Phys. A, 32 (1999), pp.~595--630.

\bibitem{KosSch_incol06}
{\sc V.~Kostrykin and R.~Schrader}, {\em Laplacians on metric graphs:
  eigenvalues, resolvents and semigroups}, in Quantum graphs and their
  applications, vol.~415 of Contemp. Math., Amer. Math. Soc., Providence, RI,
  2006, pp.~201--225.

\bibitem{Kuc_incol01}
{\sc P.~Kuchment}, {\em The mathematics of photonic crystals}, in Mathematical
  modeling in optical science, G.~Bao, L.~Cowsar, and W.~Masters, eds., vol.~22
  of Frontiers Appl. Math., SIAM, Philadelphia, PA, 2001, pp.~207--272.

\bibitem{Kuc_wrm04}
\leavevmode\vrule height 2pt depth -1.6pt width 23pt, {\em Quantum graphs. {I}.
  {S}ome basic structures}, Waves Random Media, 14 (2004), pp.~S107--S128.
\newblock Special section on quantum graphs.

\bibitem{KucZha_jmp19}
{\sc P.~Kuchment and J.~Zhao}, {\em Analyticity of the spectrum and
  {D}irichlet-to-{N}eumann operator technique for quantum graphs}, J. Math.
  Phys., 60 (2019), pp.~093502, 8.

\bibitem{Kur_jmp13}
{\sc P.~Kurasov}, {\em Inverse scattering for lasso graph}, J. Math. Phys., 54
  (2013), pp.~042103, 14.

\bibitem{LawTanChr_sr22}
{\sc T.~Lawrie, G.~Tanner, and D.~Chronopoulos}, {\em A quantum graph approach
  to metamaterial design}, Scientific Reports, 12 (2022), p.~18006.

\bibitem{LR13}
{\sc P.~H. Lin and J.~Ro}, {\em Vibration analysis of planar serial-frame
  structures}, Journal of Sound and Vibration, 262 (2003), pp.~1113--1131.

\bibitem{Mei19}
{\sc C.~Mei}, {\em Analysis of in- and out-of plane vibrations in a rectangular
  frame based on two- and three-dimensional structural models}, Journal of
  Sound and Vibration, 440 (2019), pp.~412--438.

\bibitem{Mugnolo_book}
{\sc D.~Mugnolo}, {\em Semigroup methods for evolution equations on networks},
  Understanding Complex Systems, Springer, Cham, 2014.

\bibitem{Nic_incol85}
{\sc S.~Nicaise}, {\em Some results on spectral theory over networks, applied
  to nerve impulse transmission}, in Orthogonal polynomials and applications
  ({B}ar-le-{D}uc, 1984), vol.~1171 of Lecture Notes in Math., Springer,
  Berlin, 1985, pp.~532--541.

\bibitem{Ong_diss}
{\sc B.-S. Ong}, {\em Spectral Problems of Optical Waveguides and Quantum
  Graphs}, PhD thesis, Texas A\&M University, 2006.

\bibitem{SarCarAnd_jmb14}
{\sc J.~Sarhad, R.~Carlson, and K.~E. Anderson}, {\em Population persistence in
  river networks}, J. Math. Biol., 69 (2014), pp.~401--448.

\bibitem{SchKot_prl03}
{\sc H.~Schanz and T.~Kottos}, {\em Scars on quantum networks ignore the
  {L}yapunov exponent}, Phys. Rev. Lett., 90 (2003), p.~234101.

\bibitem{Schmudgen_unboundedSAO}
{\sc K.~Schm\"{u}dgen}, {\em Unbounded self-adjoint operators on {H}ilbert
  space}, vol.~265 of Graduate Texts in Mathematics, Springer, Dordrecht, 2012.

\bibitem{Sim_jfa77}
{\sc B.~Simon}, {\em On the absorption of eigenvalues by continuous spectrum in
  regular perturbation problems.}, J. Functional Analysis, 25 (1977),
  pp.~338--344.

\bibitem{SmiSol_conm06}
{\sc U.~Smilansky and M.~Solomyak}, {\em The quantum graph as a limit of a
  network of physical wires}, in Quantum graphs and their applications,
  vol.~415 of Contemp. Math., Amer. Math. Soc., Providence, RI, 2006,
  pp.~283--291.

\bibitem{WilWit_ijms70}
{\sc F.~W. Williams and W.~H. Wittrick}, {\em An automatic computational
  procedure for calculating natural frequencies of skeletal structures}, Int.
  J. Mech. Sci., 12 (1970), pp.~781--791.

\end{thebibliography}

\end{document}